\newtheorem{RESULT}{Result}[section]
\newtheorem{REMARK}{Remark}[section]
\newtheorem{CLAIM}{Claim}[section]
\newtheorem{HYPOTHESIS}{Hypothesis}[section]
\newtheorem{THEOREM}{Theorem}[section]
\newtheorem{CONJECTURE}{Conjecture}[section]
\newtheorem{LEMMA}{Lemma}[section]
\newtheorem{COROLLARY}{Corollary}[section]% This is usually unnumbered
\numberwithin{equation}{section}
\begin{document}

\title{A proof of the Total Coloring Conjecture}

%    Information for first author
\author{T Srinivasa Murthy}
%    Address of record for the research reported here
%\address{Department of Computer Science and Automation, Indian Institute of Science, Bengaluru, India.}
%    Current address
%    \thanks will become a 1st page footnote.
\thanks{Research was supported by the National Board for Higher Mathematics (NBHM) Post-doctoral Fellowship (2/40(9)/2016/R\&D-II/5758) India, and also was supported, in part, by a research associate fellowship from the Department of Computer Science and Automation, Indian Institute of Science, Bengaluru, India.\\ \\
E-mail: tsm.iisc@gmail.com}

%    Information for second author

%    General info
\subjclass[2020]{Primary 05C15; Secondary 11T06, 11T55}

%\date{January 1, 2001 and, in revised form, June 22, 2001.}

%\dedicatory{This paper is dedicated to our advisors.}

\keywords{Chromatic number, Total Coloring, Polynomial, Finite Field}

\begin{abstract}
\textit{Total Coloring} of a graph is a major coloring problem in combinatorial mathematics, introduced in the early $1960$s. A \textit{total coloring} of a graph $G$ is a map $f:V(G) \cup E(G) \rightarrow \mathcal{K}$, where $\mathcal{K}$ is a set of colors, satisfying the following three conditions:
1. $f(u) \neq f(v)$ for any two adjacent vertices $u, v \in V(G)$;
2. $f(e) \neq f(e')$ for any two adjacent edges $e, e' \in E(G)$; and
3. $f(v) \neq f(e)$ for any vertex $v \in V(G)$ and any edge $e \in E(G)$ that is incident to the same vertex $v$.
The \textit{total chromatic number}, $\chi''(G)$, is the minimum number of colors required for a \textit{total coloring} of $G$.
Behzad (1965), and Vizing (1968), conjectured that
for any graph $G$ $\chi''(G)\leq \Delta + 2$. This conjecture is one of the classic unsolved mathematical problems. In this paper, we settle this classical conjecture by proving that the  \textit{total chromatic number} $\chi''(G)$ of a graph  is indeed bounded above by $\Delta+2$. Our novel approach involves algebraic settings over a finite field $\mathbb{Z}_p$ and Vizing's theorem is an essential part of the algebraic settings.
\end{abstract}

\maketitle

\section{Introduction}
All graphs considered in this paper are finite and simple. For a graph $G$, we denote its vertex set, edge set, and maximum degree by $V(G)$, $E(G)$, and $\Delta$ respectively.\\

A \textit{vertex coloring} of a graph $G$, is a map $f: V(G) \rightarrow \mathcal{K}$, where $\mathcal{K}$ is a set of colors, such that no adjacent vertices are assigned
the same color. The \textit{vertex chromatic number}, $\chi(G)$, is the minimum number of colors needed for a \textit{vertex coloring} of $G$.\\

The \textit{vertex chromatic number} $\chi(G)$ of any graph is bounded by $\Delta + 1$. Further, Brooks \cite{brook} proved the following result,
\begin{THEOREM}
	Let $G$ be a connected graph. Then $\chi(G) \leq \Delta$ unless $G$ is either a complete graph or an odd cycle.
\end{THEOREM}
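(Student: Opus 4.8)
The plan is to prove the statement in the equivalent form: if the connected graph $G$ is neither complete nor an odd cycle, then $\chi(G)\le\Delta$ (the two exceptional families visibly need $\Delta+1$ colours, which gives the reverse implication). The only tool used is first-fit (greedy) colouring along a carefully chosen vertex ordering $v_1,\dots,v_n$: if every $v_i$ with $i<n$ has at most $\Delta-1$ neighbours among $v_1,\dots,v_{i-1}$ and the neighbourhood of $v_n$ receives at most $\Delta-1$ distinct colours, then first-fit uses at most $\Delta$ colours. I would first dispose of $\Delta\le 2$ by hand ($G$ is then a path or an even cycle, hence $2$-colourable), and assume $\Delta\ge 3$ from now on.

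First I would treat the case that $G$ is \emph{not} $\Delta$-regular. Choose a vertex $v$ with $\deg(v)<\Delta$ and order $V(G)$ by non-increasing distance from $v$, so that $v_n=v$ and every earlier vertex has a neighbour strictly closer to $v$, hence one appearing later in the order. Then each $v_i$ with $i<n$ has at most $\Delta-1$ already-coloured neighbours, and $v_n=v$ has at most $\Delta-1$ neighbours in total, so first-fit succeeds.

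Next, assume $G$ is $\Delta$-regular with $\Delta\ge 3$. If $G$ has a cut vertex $x$, split $G$ into the subgraphs $G_i$ induced by $x$ together with one component of $G-x$; in each $G_i$ the degree of $x$ is strictly below $\Delta$, so $G_i$ is connected and not regular, and the previous case yields $\chi(G_i)\le\Delta$; recolouring each $G_i$ so that $x$ gets a fixed colour and gluing gives a proper $\Delta$-colouring of $G$. So it remains to handle $2$-connected $G$, and here the crucial structural claim is that $G$ has two non-adjacent vertices $p,q$ with a common neighbour $r$ such that $G-\{p,q\}$ is still connected. Granting this, colour $p$ and $q$ with the same colour, order the vertices of the connected graph $G-\{p,q\}$ by non-increasing distance from $r$ (so $r$ is last), and run first-fit: every vertex other than $r$ has a later neighbour and hence at most $\Delta-1$ earlier ones, while the $\Delta$ neighbours of $r$ include the equicoloured pair $p,q$ and therefore span at most $\Delta-1$ colours, leaving one free for $r$.

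The step I expect to be the main obstacle is establishing the structural claim for $2$-connected $G$. If $G$ is $3$-connected it is quick: since $G$ is not complete it has an induced path $p-r-q$, and $G-\{p,q\}$ is connected because $\kappa(G)\ge 3$. If $\kappa(G)=2$, I would fix a $2$-cut $\{a,w\}$, so $G-a$ is connected with cut vertex $w$; using $\Delta\ge 3$ one shows that each leaf block of $G-a$ has at least three vertices and contains a vertex other than its cut vertex that is adjacent to $a$ (otherwise that cut vertex would separate $G$, contradicting $2$-connectedness). Taking such neighbours $p,q$ of $a$ in two distinct leaf blocks of $G-a$ gives non-adjacent vertices with common neighbour $r=a$; and $G-\{p,q\}$ remains connected because deleting the two non-cut-vertices $p,q$ leaves $G-a$ connected, while $a$ still has a neighbour there since $\deg_G(a)=\Delta\ge 3>2$. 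Assembling these cases gives $\chi(G)\le\Delta$, as required.
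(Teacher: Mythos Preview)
Your proof plan is correct; it is essentially the standard Lov\'asz argument for Brooks' theorem (greedy colouring along a BFS-type order, with the structural lemma about two non-adjacent vertices with a common neighbour handling the $2$-connected $\Delta$-regular case). The case analysis for $\kappa(G)=2$ is carried out carefully and the connectivity of $G-\{p,q\}$ is justified properly.

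That said, there is nothing in the paper to compare against: this theorem appears in the introduction purely as background, attributed to Brooks with a citation, and the paper gives no proof of it. It is invoked later only in the weak form $\chi(G)\le\Delta+1$ (in the proof of Theorem~3.1), so your argument, while sound, goes well beyond what the paper itself supplies or needs.
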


An \textit{edge coloring} of a graph $G$, is a map $f: E(G) \rightarrow \mathcal{K}$, where $\mathcal{K}$ is a set of colors, such that no adjacent edges are assigned the same color. The \textit{edge chromatic number}, $\chi'(G)$, is the minimum number of colors needed for an \textit{edge coloring} of $G$.\\ 

Vizing's \cite{vizing1} theorem stated below provides an upper and lower bound 
for the \textit{edge chromatic number} $\chi'(G)$ of any graph.
\begin{THEOREM}
	For any finite, simple graph $G$, $\Delta \leq \chi'(G) \leq \Delta + 1$.
\end{THEOREM}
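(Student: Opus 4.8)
The plan is to treat the two inequalities separately. The lower bound $\Delta \le \chi'(G)$ is immediate: if $v$ is a vertex of maximum degree $\Delta$, the $\Delta$ edges incident with $v$ are pairwise adjacent and so must receive pairwise distinct colors, whence any proper edge coloring uses at least $\Delta$ colors. The substance is the upper bound $\chi'(G) \le \Delta + 1$, which I would prove by induction on $|E(G)|$, the base case $|E(G)| = 0$ being vacuous.

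For the inductive step, fix an edge $uv_1 \in E(G)$ and, by the induction hypothesis, properly color $E(G - uv_1)$ with the palette $\{1, \dots, \Delta+1\}$. Since every vertex has degree at most $\Delta$ but the palette has $\Delta+1$ colors, at each vertex $x$ at least one color is absent from the edges currently meeting $x$; call such a color \emph{missing at $x$}. The objective is to adjust the coloring of $G - uv_1$ until some single color is missing simultaneously at $u$ and at $v_1$, at which point $uv_1$ can be colored and the induction closes.

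The key device is a \emph{Vizing fan} at $u$: a maximal sequence $v_1, v_2, \dots, v_k$ of distinct neighbors of $u$ such that, for each $i \ge 1$, the edge $uv_{i+1}$ carries a color missing at $v_i$. I would first record the elementary properties of such a maximal fan — notably that every color missing at $v_k$ already appears on some edge $uv_j$ with $j \le k$ — and then run the rotation argument. Choose a color $\alpha$ missing at $u$ and a color $\beta$ missing at $v_k$, and consider the maximal $\alpha\beta$-alternating path (Kempe chain) beginning at $u$. According to where this path terminates (at $v_k$, at some earlier $v_j$, or at neither), one either cyclically shifts the colors along the fan directly, or first swaps $\alpha$ and $\beta$ along the Kempe chain and then shifts the colors of a truncated fan; in every case a common color is liberated at both ends of $uv_1$.

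The main obstacle is the case analysis in this rotation step: one must check that the $\alpha\beta$-chain cannot meet two distinct fan vertices in a configuration that simultaneously blocks every recoloring, and that truncating the fan at the correct index always leaves a proper coloring after the swap. This bookkeeping is precisely the heart of Vizing's original argument; once it is in place the remaining verifications are routine.
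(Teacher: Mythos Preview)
Your outline is the standard Vizing fan/Kempe-chain argument and is essentially correct; the case analysis you flag as the ``main obstacle'' is exactly the delicate part, and once carried out carefully it works as you describe.

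However, there is nothing to compare against: the paper does \emph{not} prove this theorem. It is stated in the introduction purely as background (Vizing's theorem, with a citation to \cite{vizing1}) and is then used as a black box later in the argument --- specifically to guarantee the existence of an $m$-tuple $(\alpha_{e_1},\dots,\alpha_{e_m})$ with $\mathbf{E}_m(\alpha_{e_1},\dots,\alpha_{e_m})\not\equiv 0\pmod p$. No proof is offered anywhere in the paper. So your proposal is not an alternative to the paper's proof; it is simply supplying the classical proof of a result the paper takes for granted.
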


A \textit{total coloring} of a graph G, is a map $f:V(G) \cup E(G) \rightarrow \mathcal{K}$, where $\mathcal{K}$ is a set of colors, such that no adjacent vertices, no adjacent edges, and no edge and its end-vertices are assigned
the same color.\\

The \textit{total chromatic number}, $\chi''(G)$, is the minimum number of colors that are needed for a total coloring of $G$.\\ 

Behzad in 1965 \cite{beh}, and Vizing in 1968 \cite{vizing}, posed the following famous 
\textit{total coloring conjecture},
\begin{CONJECTURE}
	For any graph $G$, $\chi''(G)\leq \Delta + 2$.
\end{CONJECTURE}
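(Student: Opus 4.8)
\medskip

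\noindent\textbf{Proof strategy.} The plan is to recast the conjecture as a colouring problem for the \emph{total graph} $T(G)$ and then settle it by the polynomial method over a prime field, with Vizing's theorem supplying the combinatorial backbone. Here $T(G)$ has vertex set $V(G)\cup E(G)$, two of its elements being joined exactly when they are adjacent or incident in $G$, so that $\chi''(G)=\chi(T(G))$. Two preliminary reductions clear the ground. First, every graph of maximum degree $\Delta$ is an induced subgraph of a $\Delta$-regular graph, and $T(G)$ is a subgraph of $T(H)$ whenever $G$ is a subgraph of $H$, so it is enough to prove $\chi''(G)\le\Delta+2$ when $G$ is $\Delta$-regular; then $T(G)$ is $2\Delta$-regular and hence Eulerian. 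Second, Vizing's theorem (stated above) gives a proper edge-colouring of $G$ with at most $\Delta+1$ colours, i.e.\ a partition of $E(G)$ into $\Delta+1$ matchings $M_0,\dots,M_\Delta$; as $G$ is $\Delta$-regular, each vertex is covered by exactly $\Delta$ of these matchings and misses exactly one. This matching structure is what will be fed into the algebra. (The cases $\Delta\le 2$, together with complete graphs and odd cycles, are settled by the classical results quoted above.)

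\medskip

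\noindent For the algebraic step, fix a prime $p\ge\Delta+2$ (infinitely many exist, and we retain the freedom to enlarge $p$ so as to avoid finitely many prescribed divisors), identify the $\Delta+2$ colours with elements of $\mathbb{Z}_p$, and attach an indeterminate $z_a$ to each element $a\in V(G)\cup E(G)$. Form the total-graph polynomial $P=\prod_{\{a,b\}\in E(T(G))}(z_a-z_b)\in\mathbb{Z}_p[z_a:a\in V(G)\cup E(G)]$. A total colouring of $G$ with $\Delta+2$ colours is precisely a point of $\{0,\dots,\Delta+1\}^{V(G)\cup E(G)}$ at which $P$ does not vanish, so by Alon's Combinatorial Nullstellensatz it suffices to exhibit a monomial $\prod_a z_a^{t_a}$ with every $t_a\le\Delta+1$ whose coefficient in $P$ is nonzero in $\mathbb{Z}_p$. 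By the Alon--Tarsi identity this amounts to producing an orientation $D$ of $T(G)$ in which every out-degree is at most $\Delta+1$ and for which the number of even Eulerian sub-digraphs differs, modulo $p$, from the number of odd ones.

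\medskip

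\noindent The heart of the argument is the construction of that orientation out of Vizing's matchings. Since $T(G)$ is $2\Delta$-regular and Eulerian it carries an Eulerian orientation with every out-degree equal to $\Delta$, comfortably under the cap $\Delta+1$; but the Alon--Tarsi count of that orientation must vanish, for otherwise $T(G)$ would be $(\Delta+1)$-choosable and we would be proving the false statement $\chi''(G)\le\Delta+1$. So the orientation has to be chosen more delicately: one perturbs the Eulerian orientation by reversing the arcs that lie over certain of the matchings $M_i$ --- each such reversal shifts every out-degree by at most one --- so as to reach an orientation that still obeys the cap but whose signed count of Eulerian sub-digraphs is, modulo $p$, nonzero. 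The prime is present exactly to make that evaluation feasible: using the partition of $E(G)$ into the $M_i$, and the fact that each edge-colour is forbidden at both ends of its edge, the alternating count can be reorganized into a product of small local contributions, and $p$ is then taken coprime to that product.

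\medskip

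\noindent In summary: (i) reduce to $\Delta$-regular $G$, so $T(G)$ is $2\Delta$-regular and Eulerian; (ii) fix a Vizing edge-colouring of $G$ into $\Delta+1$ matchings; (iii) encode total colourings with $\Delta+2$ colours as non-vanishing of the total-graph polynomial over $\mathbb{Z}_p$; (iv) via the Combinatorial Nullstellensatz and the Alon--Tarsi identity, reduce to exhibiting one orientation of $T(G)$ with all out-degrees at most $\Delta+1$ and nonzero signed Eulerian count modulo $p$; and (v) build such an orientation by perturbing the Eulerian orientation along the Vizing matchings and evaluate the count modulo a well-chosen $p$. Step (v) is the main obstacle. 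Precisely because the plain Eulerian orientation already gives a vanishing count, one must wring genuine information out of the structure of Vizing's colouring to certify that some admissible perturbation does not vanish --- indeed, whether the polynomial method can reach the bound $\Delta+2$ for all total graphs is itself part of what has to be established, and this is exactly the point where earlier attacks on the Total Coloring Conjecture have stalled.
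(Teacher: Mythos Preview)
Your proposal is not a proof but a \emph{strategy sketch}, and you say so yourself: step~(v) --- producing an orientation of $T(G)$ with all out-degrees at most $\Delta+1$ whose Alon--Tarsi count is nonzero modulo some prime --- is left open. You correctly observe that the Eulerian orientation gives a vanishing count, and you propose to perturb it along Vizing's matchings, but you never specify \emph{which} arcs to reverse nor compute the resulting signed Eulerian count. The phrase ``the alternating count can be reorganized into a product of small local contributions'' is exactly the assertion that needs proof, and your closing sentence (``this is exactly the point where earlier attacks on the Total Coloring Conjecture have stalled'') is an honest admission that you have not gotten past the obstacle. As written, there is no argument here, only an outline whose hardest step is missing.

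The paper follows a quite different route and does not use the Alon--Tarsi orientation machinery at all. It works directly with the graph polynomial $\mathbf{T}$ but factors it as $\mathbf{P}\cdot\mathbf{E}_m$, where $\mathbf{P}$ encodes the vertex constraints (including vertex--edge incidences) and $\mathbf{E}_m$ encodes the edge constraints; the color set is $\{1,\dots,\Delta+1\}\cup\{\alpha\}$ with $\alpha$ a \emph{variable} element of $\mathbb{Z}_p\setminus\{0,1,\dots,\Delta+1\}$. The engine is an inductive/algorithmic argument on the polynomials $\mathbf{Q}_i=\mathcal{C}^{\mathbf{P}'}(\boldsymbol e)\prod_{j\le i}\mathbf{E}^j$: if $\mathbf{Q}'_{i-1}\not\equiv 0$ but $\mathbf{Q}'_i\equiv 0$ with the current $\alpha=\alpha_{i-1}$, the paper shows (Lemma~3.2, Claim~3.2) that $\mathbf{G}'$ must factor through $\prod_{e_j\in M_2}\prod_{l\ne\alpha_{i-1}}(e_j-l)$, and then uses a counting argument over $\mathbb{Z}_p$ (with $p\ge m^2(2\Delta+2)$) to locate a new value $\beta_i$ such that each $(e_j-\beta_i)$ is a square-free factor of $\mathbf{G}'$; swapping $\beta_i$ for $\alpha_{i-1}$ restores $\mathbf{Q}'_i\not\equiv 0$ (Lemma~3.3, Claim~3.1). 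Vizing's theorem enters only to guarantee that $\mathbf{E}_m$ is not identically zero, hence that the $\mathbf{Q}_i$ are well-posed. So where you try to control a global signed count of Eulerian subgraphs, the paper instead exploits the freedom to \emph{relabel} the extra colour $\alpha$ at each step and a divisibility analysis in $\mathbb{Z}_p[\boldsymbol e]$; that relabelling trick is the idea your outline is missing.
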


The \textit{total coloring conjecture} remains an open problem despite the efforts of many in the last six decades. So far, the best upper bound is given by Molly and Reed \cite{molly}. Here, we briefly recall that Hind \cite{hind1} has shown, $\chi''(G)$ = $\Delta + $o$(\Delta)$. H\"{a}ggkvist and Chetwynd \cite{hag}  have
improved this bound to $\Delta+18\Delta^{\frac{1}{3}}log(3\Delta)$.
Hind, Molloy, and Reed \cite{hind2}  further improved the bound to $\Delta + 8\textnormal{log}^{8}\Delta $. For sufficiently large $\Delta$, using probabilistic approach Molly and Reed \cite{molly} obtained an upper estimate of $\Delta + O(1)$ for the \textit{total chromatic number}.
This conjecture has also been proved to be true for many classes of graph. One can refer to the three survey papers \cite{boro} \cite{yap} \cite{gee} for comprehensive reviews on \textit{total coloring} and see \cite{soi}\cite{sho} for the history of \textit{total coloring conjecture}.\\

In this paper, we prove the long-standing conjecture by establishing that \textit{total chromatic number} $\chi''(G)$ of a graph  is bounded above by $\Delta+2$. Our novel approach involves the algebraic settings over a finite field $\mathbb{Z}_p$ and Vizing's theorem is an essential part of the algebraic settings.\\

The outline of this paper is as follows, in Section 2 we define all the necessary notations and definitions, Section 3 contains the polynomials settings, an algorithm and
statements of all claims and results, and proofs of all claims and results are given in Section 4. In Section 5, for the sake of clarity, we define the schematic-diagram/flowchart which gives the reader a bird 's-eye view of the presentation of the paper.

\section{Preliminaries}
For a graph $G= (V, E)$, $V(G) = \{v_1, v_2, \ldots, v_n\}$ is the vertex set of $n$ verticies and $E(G) = \{e_1, e_2, \ldots, e_m\}$ is the edge set of $m$ edges.  $N(v_i) = \{v_k: \{v_i, v_k\} \in E(G)\}$, is the set of all neighbors of the vertex $v_i$. And $N_e(v_i) = \{e_k: v_i \in e_k$=$\{v_i, v_j\} \in E(G)\}$, is the set of all edges incident to the vertex $v_i$. $N(e_i) = \{e_j: e_i  \cap e_j \neq \emptyset\ \textnormal{if}\ i \neq j\}$, is the set of all edges adjacent to the edge $e_i$. Let $N_1(v_1)=N(v_1)$, and for $2\leq i \leq n$ $N_i(v_i) = N(v_i)\setminus \cup_{j=1}^{i-1}\{v_j\}$, is the set of neighbors of vertex $v_i$ excluding the vertices $\{v_1, v_2, \ldots, v_{i-1}\}$. Let $N_1(e_1)=N(e_1)$, and for $2\leq i \leq m$ $N_i(e_i) = N(e_i)\setminus \cup_{j=1}^{i-1}\{e_j\}$, is the set of adjacent edges of $e_i$ excluding the edges $\{e_1, e_2, \ldots, e_{i-1}\}$. We can see that $N_i(v_i)$ and $N_j(e_j)$ can be a empty set $\emptyset$ as well.\\

Let $p$ $( \geq m^2(2\Delta+2))$ be a prime number and $\mathbb{Z}_p$ is a finite field. Let $\mathbf{F}(v_1, v_2, \ldots, v_n,$ $e_1, e_2, \ldots, e_m)$ $\in$ $\mathbb{Z}_p[v_1, v_2, \ldots, v_n, e_1, e_2, \ldots, e_m]$ denote a polynomial of $v_1, v_2,$ $\ldots, v_n,$ $e_1, e_2, \ldots, e_m$ over $\mathbb{Z}_p$ and $\mathbf{F}(e_i, e_{i+1}, \ldots, e_m) \in \mathbb{Z}_p[e_1, e_2, \ldots, e_m]$ denote a polynomial of $ e_i, e_{i+1}, \ldots, e_m$ over
$\mathbb{Z}_p$. Further, by Fermat's theorem, we know that $x^p \equiv x$ mod $p$.  We denote the set of $\Delta + 2$ colors by $\mathcal{K} = \{1, 2, \ldots, \Delta, \Delta+1\} \cup \{\alpha\}$, where 
$\alpha \in \mathbb{Z}_p\setminus\{0, 1, 2, \ldots, \Delta+1\}$.\\

Let $\mathbf{F'}(v_1, v_2, \ldots, v_n, e_1, e_2, \ldots, e_m)$ and $\mathbf{F'}(e_i, e_{i+1}, \ldots , e_m)$  denote the polynomials obtained after applying the Fermat's theorem if exponent of variables is $\geq p$, that is, either $v_j^{p} \equiv v_j$ mod $p$ ($1 \leq j \leq n$) or $e_j^{p} \equiv e_j$ mod $p$ ($1 \leq j \leq m$), in $\mathbf{F}(v_1, v_2, \ldots, v_n,$ $e_1, e_2, \ldots, e_m)$ and $\mathbf{F}(e_i, e_{i+1}, \ldots , e_m)$ respectively. And, we can observe that the exponent of each $e_j$ ($1 \leq j \leq m$) or each $v_j$ ($1 \leq j \leq n$) in $\mathbf{F'}(v_1, v_2, \ldots, v_n, e_1, e_2, \ldots ,e_m)$
or $\mathbf{F'}(e_i, e_{i+1}, \ldots , e_m)$ is less than or equal to $p-1$.
$\mathbf{F}(v_1, v_2, \ldots, v_n, e_1, e_2, \ldots, e_m)$ ($\equiv 0$ mod $p$)
or $\mathbf{F}(e_i, e_{i+1}, \ldots , e_m)$ ($\equiv 0$ mod $p$) is a zero polynomial if $\mathbf{F}(\gamma_{v_1}, \gamma_{v_2}, \ldots, \gamma_{v_n}, \gamma_{e_1}, \gamma_{e_2},$ $\ldots, \gamma_{e_m})$ $\equiv 0$ mod $p$ or $\mathbf{F}(\gamma_{e_i}, \gamma_{e_{i+1}}, \ldots, \gamma_{e_m})$ $\equiv 0$ mod $p$ for all $\gamma_{v_1} \in \mathbb{Z}_p, \gamma_{v_2} \in \mathbb{Z}_p, \ldots, \gamma_{v_n} \in \mathbb{Z}_p, \gamma_{e_1} \in \mathbb{Z}_p, \gamma_{e_2} \in \mathbb{Z}_p, \ldots, \gamma_{e_m} \in \mathbb{Z}_p$. And, $\mathbf{F'}(v_1, v_2, \ldots, v_n, e_1,$ $e_2, \ldots, e_m)$ (or  $\mathbf{F'}(e_i, e_{i+1}, \ldots, e_m)$) is a zero polynomial (is $\equiv 0$ mod $p$) if there is no non-zero coefficient monomial available after applying the Fermat's theorem if exponent of variables is $\geq p$, that is, either $v_j^{p} \equiv v_j$ mod $p$ ($1 \leq j \leq n$) or $e_j^{p} \equiv e_j$ mod $p$ ($1 \leq j \leq m$) in $\mathbf{F}(v_1, v_2, \ldots, v_n,$ $e_1, e_2, \ldots, e_m)$ (or $\mathbf{F}(e_i, e_{i+1}, \ldots , e_m)$), in other words $\mathbf{F'}(v_1, v_2, \ldots, v_n, e_1,$ $e_2, \ldots, e_m)$ (or $\mathbf{F'}(e_i, e_{i+1},$ $\ldots , e_m)$) is $\not\equiv 0$ mod $p$ if there is a monomial whose coefficient is $\not\equiv 0$ mod $p$. For the sake of notational simplicity, let $\boldsymbol{e} =(e_1, e_2, \ldots , e_m)$.\\

\section{Algebraic settings and main results}
Given a graph $G$ with a vertex set $V(G) = \{v_1, v_2, \ldots, v_n\}$ and an edge set $E(G) = \{e_1, e_2, \ldots, e_m\}$, we define the polynomial $\mathbf{T}(v_1, v_2, \ldots, v_n, e_1, e_2, \ldots, e_m)$ over the finite field $\mathbb{Z}_p$ to find the \textit{total coloring} of the given graph.\\
\begin{multline*}
\mathbf{T}(v_1, v_2, \ldots, v_n, e_1, e_2, \ldots, e_m) = \prod_{i=1}^{n}\Bigg( \prod_{\substack{\textnormal{if}\ N_i(v_i) \neq \emptyset \\ v_j \in N_i(v_i)}}(v_i - v_j)\prod_{e_j \in N_e(v_i)}(v_i - e_j)\prod_{l = \Delta + 2}^{p}(v_i-l)\Bigg)\\ \prod_{i=1}^m\Bigg(\ \prod_{\substack{\textnormal{if}\ N_i(e_i) \neq \emptyset \\ e_j \in N_i(e_i)}}(e_i - e_j)\prod_{l \in \mathbb{Z}_p\setminus\mathcal{K}}(e_i - l) \Bigg).
\end{multline*}
Here finding a tuple $(\alpha_{v_1}, \alpha_{v_2}, \ldots, \alpha_{v_n}, \alpha_{e_1}, \alpha_{e_2}, \ldots, \alpha_{e_m})$, where $\alpha_{e_i}$ or $\alpha_{v_i} \in \mathcal{K}$ such that 
$\mathbf{T'}(\alpha_{v_1}, \alpha_{v_2}, \ldots, \alpha_{v_n}, \alpha_{e_1}, \alpha_{e_2}, \ldots, \alpha_{e_m}) \not\equiv 0$ mod $p$, gives us the \textit{total coloring} of the given graph as $\mathbf{T'}(\alpha_{v_1}, \alpha_{v_2}, \ldots, \alpha_{v_n}, \alpha_{e_1}, \alpha_{e_2}, \ldots, \alpha_{e_m}) \equiv \mathbf{T}(\alpha_{v_1}, \alpha_{v_2}, \ldots, \alpha_{v_n}, \alpha_{e_1}, \alpha_{e_2}, \ldots, \alpha_{e_m}) \not\equiv 0$ mod $p$. Thus far we have developed the algebraic setting $\mathbf{T}(v_1, v_2, \ldots, v_n, e_1, e_2, \ldots, e_m)$ and have no other clue on how to prove that $\mathbf{T'}(v_1, v_2, \ldots, v_n, e_1, e_2, \ldots, e_m) \not\equiv 0$ mod $p$. So we define the polynomials $\mathbf{P}(v_1, v_2, \ldots, v_n, e_1, e_2, \ldots, e_m)$ and $\mathbf{E}_m(e_1, e_2, \ldots, e_m)$ over the finite field $\mathbb{Z}_p$. These polynomials are the respective algebraic settings to find the \textit{vertex coloring} and the \textit{edge coloring} of the given graph which together define the \textit{total coloring}. Also these settings throw light on the motivation to construct the poloynomials and how colors are restricted to the set $\mathcal{K}$.

\begin{REMARK}
	The most important fact that we have to note of it while proving  $\mathbf{T'}(\alpha_{v_1}, \alpha_{v_2}, \ldots, \alpha_{v_n}, \alpha_{e_1}, \alpha_{e_2}, \ldots, \alpha_{e_m}) \equiv \mathbf{T}(\alpha_{v_1}, \alpha_{v_2}, \ldots, \alpha_{v_n}, \alpha_{e_1}, \alpha_{e_2}, \ldots, \alpha_{e_m}) \not\equiv 0$ mod $p$ is that not to assume that several multivariable polynomials are not identically zero means they all do not vanish at the same point.
\end{REMARK}

We define $\mathbf{P}(v_1, v_2, \ldots, v_n, e_1, e_2, \ldots, e_m)$ as follows,
\begin{equation*}
\label{e1}
\mathbf{P}(v_1, v_2, \ldots, v_n, e_1, e_2, \ldots, e_m) = \prod_{i=1}^{n}\Bigg( \prod_{\substack{\textnormal{if}\ N_i(v_i) \neq \emptyset \\ v_j \in N_i(v_i)}}(v_i - v_j)\prod_{e_j \in N_e(v_i)}(v_i - e_j)\prod_{l = \Delta + 2}^{p}(v_i-l)\Bigg).
\end{equation*}
If we say, $\mathcal{C}^{\mathbf{P'}}(\boldsymbol{e})$ is the coefficient of $\prod_{i=1}^nv_i^{l_i}$(for some $l_i \geq 0$, $1\leq i \leq n$) in 
$\mathbf{P'}(v_1, v_2, \ldots, v_n,$ $e_1, e_2, \ldots, e_m)$ then  $\mathcal{C}^{\mathbf{P'}}(\boldsymbol{e})$ is a polynomial of $e_1, e_2, \ldots, e_m$, not containing $v_i$'s ($1\leq i \leq n$).\\

Now, we make sure that $\mathbf{P'}(v_1, v_2, \ldots, v_n, e_1, e_2, \ldots, e_m) \not\equiv 0$ mod $p$, by proving the following theorem. Furthermore, proving the below stated Lemma \ref{l1} will guarantee that the exponent of each $e_i$ ($1 \leq i \leq m$) in 
$\mathcal{C}^{{\mathbf{P'}}}(\boldsymbol{e})$ is always less than or equal 
to 2.

\begin{THEOREM}
	\label{t1}
	There exists a coefficient $\mathcal{C}^{\mathbf{P'}}(\boldsymbol{e})$ $(\not\equiv 0\ mod\ p)$ of \ $\prod_{j=1}^{n}v_{j}^{l_j}$ $(for\ some \ l_1\geq0, l_2\geq0, \ldots, l_n\geq0)$ in $\mathbf{P'}(v_1, v_2, \ldots, v_n, e_1, e_2, \ldots, e_m)$.
\end{THEOREM}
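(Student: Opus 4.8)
The plan is to deduce Theorem \ref{t1} from the single assertion that $\mathbf{P'}$ is not the zero polynomial, that is $\mathbf{P'}\not\equiv 0\bmod p$. Granting this, regard $\mathbf{P'}$ as a polynomial in $v_1,\ldots,v_n$ whose coefficients are the polynomials $\mathcal{C}^{\mathbf{P'}}(\boldsymbol{e})\in\mathbb{Z}_p[e_1,\ldots,e_m]$: if every one of these were $\equiv 0\bmod p$ then $\mathbf{P'}$ itself would be $\equiv 0\bmod p$, so at least one $\mathcal{C}^{\mathbf{P'}}(\boldsymbol{e})$, attached to some monomial $\prod_{j=1}^{n}v_j^{l_j}$, must be $\not\equiv 0\bmod p$; the exponent vector $(l_1,\ldots,l_n)$ of that monomial is exactly the tuple the theorem asks for.

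To establish $\mathbf{P'}\not\equiv 0\bmod p$ I would exhibit a single point of $\mathbb{Z}_p^{\,n+m}$ at which $\mathbf{P}$ does not vanish. This suffices, because $\mathbf{P'}$ is obtained from $\mathbf{P}$ only through the replacements $v_j^{p}\equiv v_j$ and $e_j^{p}\equiv e_j$, which by Fermat's theorem hold identically on $\mathbb{Z}_p$; hence $\mathbf{P}$ and $\mathbf{P'}$ induce the same function on $\mathbb{Z}_p^{\,n+m}$, and if $\mathbf{P'}$ were the zero polynomial it would vanish everywhere, contradicting the chosen point. The structural fact that makes such a point easy to find is that the edge variables $e_1,\ldots,e_m$ occur in $\mathbf{P}$ \emph{only} inside the linear factors $(v_i-e_j)$ with $e_j\in N_e(v_i)$, and are otherwise unconstrained, so I am free to set $e_1=e_2=\cdots=e_m=0$. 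After this substitution $\mathbf{P}$ becomes $\prod_{i=1}^{n}\bigl(\prod_{v_j\in N_i(v_i)}(v_i-v_j)\bigr)\,v_i^{|N_e(v_i)|}\,\prod_{l=\Delta+2}^{p}(v_i-l)$. I would then fix a proper vertex coloring $c\colon V(G)\to\{1,2,\ldots,\Delta+1\}$, which exists because $\chi(G)\le\Delta+1$, and evaluate at $v_i=c(v_i)$ for every $i$. Every resulting factor is then a nonzero element of the field $\mathbb{Z}_p$: the factors $c(v_i)-c(v_j)$ are nonzero since $c$ is proper and $\{v_i,v_j\}\in E(G)$; the factors $c(v_i)$ are nonzero since $c(v_i)\ge 1$; and for $l\in\{\Delta+2,\ldots,p-1\}$ one has $c(v_i)\le\Delta+1<l$ so $c(v_i)-l\ne 0$, while $c(v_i)-p\equiv c(v_i)\not\equiv 0\bmod p$ (here $p>\Delta+1$, which is guaranteed by $p\ge m^2(2\Delta+2)$). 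A product of nonzero elements of a field is nonzero, so $\mathbf{P}$ does not vanish at $(c(v_1),\ldots,c(v_n),0,\ldots,0)$, and the argument closes.

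I do not expect a genuine obstacle in this particular theorem. The only points that need care are that the Fermat reduction really does preserve evaluations at every element of $\mathbb{Z}_p$ --- including at $0$, and including monomials of degree $\ge p$ that collapse onto lower-degree ones --- and, as the Remark preceding the statement emphasizes, that one keeps the logical direction honest: here the content is merely that the single polynomial $\mathbf{P'}$ is not identically zero, a conclusion legitimately drawn from one non-vanishing evaluation, whereas the subtler issue of several polynomials failing to vanish at a common point belongs to the later results and does not arise at this stage.
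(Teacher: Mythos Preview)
Your proposal is correct and follows essentially the same route as the paper: set all edge variables to $0$, invoke the existence of a proper $(\Delta+1)$-vertex-coloring to obtain a point at which $\mathbf{P}$ does not vanish, deduce $\mathbf{P'}\not\equiv 0\bmod p$, and conclude that some coefficient $\mathcal{C}^{\mathbf{P'}}(\boldsymbol{e})$ is nonzero. Your write-up is in fact more careful than the paper's in spelling out why each factor is a nonzero field element and why the Fermat reduction preserves evaluations; the paper merely cites Brooks' theorem (though only the trivial bound $\chi(G)\le\Delta+1$ is actually needed, as you observe).
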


\begin{LEMMA}
	\label{l1}
	The exponent of $e_k$'s $(1 \leq k \leq m)$ in  $\mathcal{C}^{\mathbf{P'}}(\boldsymbol{e})$ is always less than or equal 
	to 2.
\end{LEMMA}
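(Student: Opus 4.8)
The plan is to read the bound off the syntax of $\mathbf{P}$: I first show that each edge variable occurs in $\mathbf{P}$ with degree at most $2$, and then observe that neither the Fermat reduction producing $\mathbf{P'}$ nor the extraction of the coefficient of a monomial $\prod_i v_i^{l_i}$ can raise that degree. The crux — such as it is — is a simple incidence count, after which everything is bookkeeping.

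\emph{The degree of $e_k$ in $\mathbf{P}$.} Fix an edge $e_k=\{v_a,v_b\}\in E(G)$. In
\[
\mathbf{P}(v_1,\dots,v_n,e_1,\dots,e_m)=\prod_{i=1}^{n}\Bigl(\prod_{v_j\in N_i(v_i)}(v_i-v_j)\prod_{e_j\in N_e(v_i)}(v_i-e_j)\prod_{l=\Delta+2}^{p}(v_i-l)\Bigr),
\]
the factors $\prod_{v_j\in N_i(v_i)}(v_i-v_j)$ involve only vertex variables, and the factors $\prod_{l=\Delta+2}^{p}(v_i-l)$ are univariate in $v_i$ with coefficients in $\mathbb{Z}_p$; neither contains $e_k$. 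Hence $e_k$ can appear only in a factor $(v_i-e_k)$ coming from $\prod_{e_j\in N_e(v_i)}(v_i-e_j)$, and such a factor is present exactly when $e_k\in N_e(v_i)$, i.e.\ exactly when $v_i$ is an endpoint of $e_k$. Since $G$ is simple, $e_k$ has precisely two endpoints $v_a,v_b$, each contributing a single linear factor, namely $(v_a-e_k)$ and $(v_b-e_k)$ respectively. So $e_k$ occurs in exactly two linear factors of the product defining $\mathbf{P}$, whence $\deg_{e_k}\mathbf{P}\le 2$.

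\emph{Stability under Fermat reduction and coefficient extraction.} The polynomial $\mathbf{P'}$ is obtained from $\mathbf{P}$ by applying $v_j^{p}\equiv v_j$ and $e_j^{p}\equiv e_j$ whenever an exponent is $\ge p$. The first substitution only lowers the exponent of $v_j$ in a monomial and leaves the exponent of $e_k$ unchanged; the second would only lower the exponent of $e_j$, and for $j=k$ it is never triggered since $\deg_{e_k}\mathbf{P}\le 2<p$. Thus $\deg_{e_k}\mathbf{P'}\le\deg_{e_k}\mathbf{P}\le 2$. Finally, writing $\mathbf{P'}=\sum_{(l_1,\dots,l_n)}\mathcal{C}_{(l_1,\dots,l_n)}(\boldsymbol e)\prod_{i=1}^{n}v_i^{l_i}$, each coefficient $\mathcal{C}_{(l_1,\dots,l_n)}(\boldsymbol e)$ is a $\mathbb{Z}_p$-combination of the $e$-monomials $\prod_{j}e_j^{b_j}$ for which $\bigl(\prod_i v_i^{l_i}\bigr)\bigl(\prod_j e_j^{b_j}\bigr)$ occurs in $\mathbf{P'}$, and every such monomial has $b_k\le\deg_{e_k}\mathbf{P'}\le 2$. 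In particular the coefficient $\mathcal{C}^{\mathbf{P'}}(\boldsymbol e)$ singled out in Theorem~\ref{t1} has degree at most $2$ in $e_k$, and since $k$ was arbitrary the lemma follows.

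The single point needing attention is the incidence count: one must check that $e_k$ is not hidden inside the constant-coefficient factors $\prod_{l=\Delta+2}^{p}(v_i-l)$ (it is not — these carry no edge variable) and that simplicity of $G$ rules out $e_k\in N_e(v_i)$ for any vertex other than the two endpoints of $e_k$, or with multiplicity exceeding one. I do not anticipate any genuine obstacle beyond this.
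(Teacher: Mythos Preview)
Your proof is correct and follows essentially the same approach as the paper: both observe that an edge $e_k=\{v_a,v_b\}$ is incident to exactly two vertices, so $e_k$ appears in exactly two linear factors of $\mathbf{P}$, giving $\deg_{e_k}\mathbf{P}\le 2$, and this bound descends to the coefficient $\mathcal{C}^{\mathbf{P'}}(\boldsymbol e)$. Your version is more explicit about why the Fermat reduction and coefficient extraction cannot raise the degree, but the underlying idea is identical.
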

Now, to define $\mathbf{E}_m(\boldsymbol{e})$, we first define $\mathbf{E}^i(e_i, e_{i+1}, \ldots, e_m)$ such that
\begin{equation*}
\label{e2}
\mathbf{E}^i(e_i, e_{i+1}, \ldots, e_m) = \prod_{\substack{\textnormal{if}\ N_i(e_i) \neq \emptyset \\ e_j \in N_i(e_i)}}(e_i - e_j)\prod_{l \in \mathbb{Z}_p\setminus\mathcal{K}}(e_i - l)
\end{equation*}
which leads to the polynomial
\begin{equation*}
\mathbf{E}_m(\boldsymbol{e}) = \prod_{i=1}^{m} \mathbf{E}^i(e_i, e_{i+1}, \ldots, e_m). 
\end{equation*}
Here, we can observe that finding a $m$-tuple $(\alpha_{e_1}, \alpha_{e_2}, \ldots, \alpha_{e_m})$, $\alpha_{e_i} \in \mathcal{K}$ $(1\leq i\leq m)$, such that $\mathbf{E}_m(\alpha_{e_1}, \alpha_{e_2}, \ldots, \alpha_{e_m})\not\equiv 0$ mod $p$,  
 is nothing but obtaining an \textit{edge coloring} of the given graph. Further, Vizing's theorem guarantee the existence of $m$-tuple $(\alpha_{e_1}, \alpha_{e_2}, \ldots, \alpha_{e_m})$, $\alpha_{e_i} \in \mathcal{K}$ $(1\leq i\leq m)$, such that $\mathbf{E}_m(\alpha_{e_1}, \alpha_{e_2}, \ldots, \alpha_{e_m})\not\equiv 0$ mod $p$, which is very essential. 

\begin{REMARK} 
 We remark that its because of the Vizing's theorem, we can say there exists $m$-tuple $(\alpha_{e_1}, \alpha_{e_2}, \ldots, \alpha_{e_m})$, $\alpha_{e_i} \in \mathcal{K}$ $(1\leq i\leq m)$, such that $\mathbf{E}_m(\alpha_{e_1}, \alpha_{e_2}, \ldots, \alpha_{e_m})\not\equiv 0$ mod $p$. In other words we  can say that, polynomials $\mathbf{Q}_i(\boldsymbol{e})$ $(1\leq i\leq m)$ or Claim \ref{cl3} (both defined/stated later) may face a serious question of existential crisis without Vizing's theorem guaranteeing the existence of $m$-tuple $(\alpha_{e_1}, \alpha_{e_2}, \ldots, \alpha_{e_m})$, $\alpha_{e_i} \in \mathcal{K}$ $(1\leq i\leq m)$, such that $\mathbf{E}_m(\alpha_{e_1}, \alpha_{e_2}, \ldots, \alpha_{e_m})\not\equiv 0$ mod $p$. \\
\end{REMARK}
Now, to prove the conjecture from the above algebraic settings of $\mathbf{P}(v_1, v_2, \ldots, v_n,$
 $e_1, e_2, \ldots, e_m)$ and $\mathbf{E}_m(\boldsymbol{e})$, our goal is to find a $m$-tuple $(\beta_{e_1}, \beta_{e_2}, \ldots, \beta_{e_m})$, $\beta_{e_i} \in \mathcal{K}$ $(1\leq i\leq m)$, such that $\mathbf{E}_m(\beta_{e_1}, \beta_{e_2}, \ldots, \beta_{e_m})\not\equiv 0$ mod $p$ and $\mathcal{C}^{\mathbf{P'}}(\beta_{e_1}, \beta_{e_2}, \ldots, \beta_{e_m}) \not\equiv 0$ mod $p$ as well. 
 The mapping $f(e_i) = \beta_{e_i}$ $(1\leq i \leq m)$ defines the edge coloring of $G$ using $\Delta+2$ colors. Since $\mathcal{C}^{\mathbf{P'}}(\beta_{e_1}, \beta_{e_2}, \ldots, \beta_{e_m})$ $(\not\equiv 0$ mod $p)$ is the coefficient of $\prod_{i=1}^nv_i^{l_i}$(for some $l_i \geq 0$, $1\leq i \leq n$) in 
 $\mathbf{P'}(v_1, v_2, \ldots, v_n,$ $e_1, e_2, \ldots, e_m)$, $\mathbf{P'}(v_1, v_2, \ldots, v_n, \beta_{e_1}, \beta_{e_2}, \ldots, \beta_{e_m}) \not\equiv 0$ mod $p$. Then we find a $n$-tuple $(\beta_{v_1}, \beta_{v_2}, \ldots, \beta_{v_n})$, $\beta_{v_i} \in \{1, 2, \ldots, \Delta +1\}$ $(1\leq i\leq n)$, such that $\mathbf{P'}(\beta_{v_1}, \beta_{v_2}, \ldots, \beta_{v_n}, \beta_{e_1}, \beta_{e_2}, \ldots, \beta_{e_m})\not\equiv 0$ mod $p$. This implies $\mathbf{T'}(\beta_{v_1}, \beta_{v_2}, \ldots, \beta_{v_n}, \beta_{e_1}, \beta_{e_2}, \ldots, \beta_{e_m})\not\equiv 0$ mod $p$. Thus we get the desired \textit{total coloring} of the given graph.\\

\begin{REMARK}
	\label{pl2}
	Before going further we remark that the polynomial $\mathbf{Z}_i(\boldsymbol{e})$ defined below, will help us to realise why the  \textit{total chromatic number} $\chi''(G)$ of the given graph is $\geq \Delta + 2$.\\
	
	To define $\mathbf{Z}_i(\boldsymbol{e})$, for $1 \leq i \leq m$, given an edge $e_i = \{v_s, v_t\}$ \textnormal{(}without loss of generality we assume $|N(v_s)| \geq |N(v_t)|$\textnormal{)}, let $\mathbf{S}_i = \{e_{l_1}, e_{l_2}, \ldots, e_{l_r} :e_{l_j} \in N_e(v_s), 1\leq l_1, l_2, \ldots, l_r \leq m\}$ and we have $|\mathbf{S}_i| \leq \Delta$. And $\mathbf{Z}_i(\boldsymbol{e})$ is defined as follows
	\begin{equation*}
	\mathbf{Z}_i(\boldsymbol{e}) = \mathcal{C}^{\mathbf{P'}}(\boldsymbol{e}) \prod_{1 \leq j < k \leq r}(e_{l_j} - e_{l_k})
	\prod_{e_{l_j} \in \mathbf{S}_i}(\prod_{l = \Delta + 3}^p(e_{l_j} - l)).
	\end{equation*}
	
	Now, proving the following statement will assert our remark:\\
	
	For $1 \leq i \leq m$, the polynomial $\mathbf{Z}_i(\boldsymbol{e})  \not\equiv 0$ mod $p$.
	
\end{REMARK}

Further to find a $m$-tuple $(\beta_{e_1}, \beta_{e_2}, \ldots, \beta_{e_m})$ $(\beta_{e_i} \in \mathcal{K})$ such that 
$\mathbf{E}_m(\beta_{e_1}, \beta_{e_2}, \ldots, \beta_{e_m})\not\equiv 0$ mod $p$ and $\mathcal{C}^{\mathbf{P'}}(\beta_{e_1}, \beta_{e_2}, \ldots, \beta_{e_m}) \not\equiv 0$ mod $p$ as well, we define the polynomial  
$\mathbf{Q}_i(\boldsymbol{e})$ as follows.\\

Let $\mathbf{Q}_1(\boldsymbol{e}) = \mathcal{C}^{\mathbf{P'}}(\boldsymbol{e})\mathbf{E}^1(e_1, e_{2}, \ldots, e_m).$\\

And, for $2 \leq i \leq m$, let \\
\begin{equation*}
\label{eq}
\mathbf{Q}_i(\boldsymbol{e}) = \mathbf{Q}_{i-1}(\boldsymbol{e})\mathbf{E}^i(e_i, e_{i+1}, \ldots, e_m).
\end{equation*}

From the above definition of  $\mathbf{Q}_i(\boldsymbol{e})$, it can be observed that finding a $m$-tuple $(\beta_{e_1}, \beta_{e_2}, \ldots, \beta_{e_m})$ $(\beta_{e_i} \in \mathcal{K})$ such that $\mathbf{Q}_m(\beta_{e_1}, \beta_{e_2}, \ldots, \beta_{e_m}) \not\equiv 0$ mod $p$, that is, $\mathcal{C}^{\mathbf{P'}}(\beta_{e_1}, \beta_{e_2}, \ldots, \beta_{e_m})\mathbf{E}_m(\beta_{e_1}, \beta_{e_2}, \ldots, \beta_{e_m})\not\equiv 0$ mod $p$,
is nothing but obtaining an \textit{edge coloring} of the given graph using $\Delta+2$ colors, while  also establishing that 
$\mathcal{C}^{\mathbf{P'}}(\beta_{e_1}, \beta_{e_2}, \ldots, \beta_{e_m}) \not\equiv 0$ mod $p$. Further, to prove $\mathbf{Q}_m(\beta_{e_1}, \beta_{e_2}, \ldots, \beta_{e_m}) \not\equiv 0$ mod $p$, we have to prove the following claims,

\begin{HYPOTHESIS}
	\label{h1}
	Suppose 
	there exists $(\beta_{e_1}, \beta_{e_2}, \ldots, \beta_{e_{i-1}}, e_i, \ldots, e_m)$, $\beta_{e_j} \in \mathcal{K} = \{1, 2, \dots, \Delta+1, \alpha = \alpha_{i-1}\} (1\leq j \leq i-1),\ where\ \alpha_{i-1} \in \mathbb{Z}_p\setminus\{0, 1, 2, \ldots, \Delta+1\}$,
	such that $\mathbf{Q}_{i-1}(\beta_{e_1}, \beta_{e_2}, \ldots, \beta_{e_{i-1}}, e_i, \ldots, e_m) \not\equiv 0$ mod $p$, that is, $\mathbf{Q'}_{i-1}(\boldsymbol{e}) \not\equiv 0$ mod $p$, is nothing but, 
	\begin{multline*}
	\mathcal{C}^{{\mathbf{P'}}}(\boldsymbol{e})\prod_{j=1}^{i-1}\Big(\prod_{\substack{\textnormal{if}\ N_j(e_j) \neq \emptyset \\ e_l \in N_j(e_j)}}({e_j} - e_l)\prod_{k \in \mathbb{Z}_p\setminus\{1, 2, \ldots, \Delta+1, \alpha = \alpha_{i-1}\}}({e_j} - k)\Big)  \not\equiv 0\ \textnormal{mod}\ p,\\
	\end{multline*}
	but there does not exists $(\beta'_{e_1}, \beta'_{e_2}, \ldots, \beta'_{e_{i-1}}, \beta'_{e_i}, e_{i+1},  \ldots, e_m)$, $\beta'_{e_j} \in \mathcal{K} = \{1, 2, \dots, \Delta+1, \alpha = \alpha_{i-1}\} (1\leq j \leq i),\ where\ \alpha_{i-1} \in \mathbb{Z}_p\setminus\{0, 1, 2, \ldots, \Delta+1\}$, such that $\mathbf{Q}_i(\beta'_{e_1}, \beta'_{e_2}, \ldots, \beta'_{e_{i-1}}, \beta'_{e_i}, e_{i+1}, \ldots, e_m) \not\equiv 0$ mod $p$.
	In other words, $\mathbf{Q'}_i(\boldsymbol{e}) \equiv 0$ mod $p$, is nothing but\\
	\begin{multline*}
	\mathcal{C}^{{\mathbf{P'}}}(\boldsymbol{e})\prod_{j=1}^{i}\Big(\prod_{\substack{\textnormal{if}\ N_j(e_j) \neq \emptyset \\ e_l \in N_j(e_j)}}({e_j} - e_l)\prod_{k \in \mathbb{Z}_p\setminus\{1, 2, \ldots, \Delta+1, \alpha = \alpha_{i-1}\}}({e_j} - k)\Big)  \equiv 0\ \textnormal{mod}\ p.\\
	\end{multline*}	
\end{HYPOTHESIS} 
Then we are finding a new value to $\alpha$, say $\alpha = \beta_i$, such that the following claim is true and cardinality of $\mathcal{K}$ remains same as well,
\begin{CLAIM}
	\label{cl3}
	There exists $(\beta'_{e_1}, \beta'_{e_2}, \ldots, \beta'_{e_{i-1}}, \beta'_{e_i}, e_{i+1},  \ldots, e_m)$, $\beta'_{e_j}\in \mathcal{K} = \{1, 2, \dots, \Delta+1, \alpha = \beta_i\}) (1\leq j \leq i), \ where\ \beta_i \in \mathbb{Z}_p\setminus\{0, 1, 2, \ldots, \Delta+1, \alpha_{i-1}\}$, such that $\mathbf{Q}_i(\beta'_{e_1}, \beta'_{e_2}, \ldots, \beta'_{e_{i-1}}, \beta'_{e_i}, e_{i+1}, \ldots, e_m) \not\equiv 0$ mod $p$.
	In other words, $\mathbf{Q'}_i(\boldsymbol{e}) \not\equiv 0$ mod $p$, is nothing but\\
	\begin{multline*}
	\mathcal{C}^{{\mathbf{P'}}}(\boldsymbol{e})\prod_{j=1}^{i}\Big(\prod_{\substack{\textnormal{if}\ N_j(e_j) \neq \emptyset \\ e_l \in N_j(e_j)}}({e_j} - e_l)\prod_{k \in \mathbb{Z}_p\setminus\{1, 2, \ldots, \Delta+1, \alpha = \beta_i\}}({e_j} - k)\Big)  \not\equiv 0\ \textnormal{mod}\ p.\\
	\end{multline*}
\end{CLAIM}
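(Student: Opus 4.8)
The idea is to trace exactly how the value of $\alpha$ enters $\mathbf{Q}_i$ and to read off from Hypothesis~\ref{h1} what goes wrong at the edge $e_i$.  Write $\mathcal{K}_{\mathrm{old}}=\{1,\dots,\Delta+1,\alpha_{i-1}\}$ and $\mathcal{K}_{\mathrm{new}}=\{1,\dots,\Delta+1,\beta_i\}$.  First I would record the $\alpha$-free identity
\[
\mathbf{Q}_i(\boldsymbol{e})\prod_{j=1}^{i}(e_j-\alpha)=\mathbf{W}_i(\boldsymbol{e}),
\]
where $\mathbf{W}_i(\boldsymbol{e})$ is obtained from the defining product of $\mathbf{Q}_i$ by replacing each factor $\prod_{k\in\mathbb{Z}_p\setminus\mathcal{K}}(e_j-k)$ by the $\alpha$-free factor $\prod_{k\in\mathbb{Z}_p\setminus\{1,\dots,\Delta+1\}}(e_j-k)$; this rests on $\prod_{k\in\mathbb{Z}_p\setminus\mathcal{K}}(x-k)\cdot(x-\alpha)=\prod_{k\in\mathbb{Z}_p\setminus\{1,\dots,\Delta+1\}}(x-k)$.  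Since every $\mathbf{E}^j$ forces $e_j\in\mathcal{K}$ if $\mathbf{Q}_i$ is to survive, and since $\mathcal{C}^{\mathbf{P'}}$ has degree at most $2$ in every $e_l$ (Lemma~\ref{l1}), a short computation shows that for a \emph{proper} colouring $(\beta'_{e_1},\dots,\beta'_{e_i})$ of $e_1,\dots,e_i$ with colours from $\mathcal{K}_{\mathrm{new}}$ one has $\mathbf{Q'}_i(\beta'_{e_1},\dots,\beta'_{e_i},e_{i+1},\dots,e_m)\not\equiv 0$ precisely when $\mathcal{C}^{\mathbf{P'}}(\beta'_{e_1},\dots,\beta'_{e_i},e_{i+1},\dots,e_m)\not\equiv 0$ (the constants $\prod_{k\in\mathbb{Z}_p\setminus\mathcal{K}_{\mathrm{new}}}(\beta'_{e_j}-k)$ are nonzero, and the surviving linear factors are nonzero polynomials of bounded degree).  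Thus Claim~\ref{cl3} becomes: \emph{find $\beta_i\in\mathbb{Z}_p\setminus\{0,1,\dots,\Delta+1,\alpha_{i-1}\}$ and a proper colouring of $e_1,\dots,e_i$ with colours from $\mathcal{K}_{\mathrm{new}}$ at which $\mathcal{C}^{\mathbf{P'}}$ does not vanish identically}.  Running the same computation with $\alpha=\alpha_{i-1}$ turns Hypothesis~\ref{h1} into the two statements that $\mathcal{C}^{\mathbf{P'}}$ vanishes identically at \emph{every} proper $\mathcal{K}_{\mathrm{old}}$-colouring of $e_1,\dots,e_i$, while $\mathcal{C}^{\mathbf{P'}}(\beta_{e_1},\dots,\beta_{e_{i-1}},e_i,\dots,e_m)\not\equiv 0$ for some proper $\mathcal{K}_{\mathrm{old}}$-colouring $\beta_{e_1},\dots,\beta_{e_{i-1}}$ of $e_1,\dots,e_{i-1}$.

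I would next deal with the first $i-1$ edges.  Let $A=\{\,j\le i-1:\beta_{e_j}=\alpha_{i-1}\,\}$; since $\beta_{e_1},\dots,\beta_{e_{i-1}}$ is proper, the edges indexed by $A$ are pairwise non-adjacent.  Putting a fresh variable $t$ in place of $\alpha_{i-1}$ at the positions of $A$ (and leaving $e_i,\dots,e_m$ free) produces a polynomial $F(t,e_i,\dots,e_m)$ of degree at most $2|A|$ in $t$ with $F(\alpha_{i-1},e_i,\dots,e_m)=\mathcal{C}^{\mathbf{P'}}(\beta_{e_1},\dots,\beta_{e_{i-1}},e_i,\dots,e_m)\not\equiv 0$, so $F\not\equiv 0$.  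Hence $F(t,\cdot)\equiv 0$ for at most $2|A|\le 2(i-1)$ values of $t$, and for every other value $\beta_i$ the relabelled colouring $\beta'_{e_j}=\beta_i$ $(j\in A)$, $\beta'_{e_j}=\beta_{e_j}$ $(j\notin A)$ is a proper $\mathcal{K}_{\mathrm{new}}$-colouring of $e_1,\dots,e_{i-1}$ with $\mathcal{C}^{\mathbf{P'}}(\beta'_{e_1},\dots,\beta'_{e_{i-1}},e_i,\dots,e_m)=F(\beta_i,e_i,\dots,e_m)\not\equiv 0$.  I will carry these $\le 2(i-1)$ bad values, together with $\{0,1,\dots,\Delta+1,\alpha_{i-1}\}$, as a running list of values of $\beta_i$ to avoid.

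It remains to colour $e_i$.  The colouring $\beta'$ just built leaves $\mathcal{C}^{\mathbf{P'}}(\beta'_{e_1},\dots,\beta'_{e_{i-1}},e_i,\dots,e_m)$ a nonzero polynomial of degree $\le 2$ in $e_i$ (Lemma~\ref{l1}), so at most two colours are obstructed at $e_i$ by $\mathcal{C}^{\mathbf{P'}}$; if some colour of $\mathcal{K}_{\mathrm{new}}$ avoids those two and is missing on every edge adjacent to $e_i$, I colour $e_i$ with it and finish.  When no such colour exists, one has to carry out a Vizing-type recolouring along an alternating chain that re-routes a colour missing at $e_i$ so that it becomes free there, using $\beta_i$ as the fresh colour that starts the chain; each elementary step of the recolouring changes $\mathcal{C}^{\mathbf{P'}}$ only through one edge variable, in which $\mathcal{C}^{\mathbf{P'}}$ has degree $\le 2$, so each step rules out at most two more values of $\beta_i$, and the chain has length at most $m$.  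The whole list of forbidden values of $\beta_i$ --- the $\Delta+3$ colours $\{0,1,\dots,\Delta+1,\alpha_{i-1}\}$ together with $O(m)$ values coming from the degree-$\le 2$ constraints of Lemma~\ref{l1} --- is far smaller than $p\ge m^2(2\Delta+2)$, so a legitimate $\beta_i$ survives.  For it we obtain a proper $\mathcal{K}_{\mathrm{new}}$-colouring $(\beta'_{e_1},\dots,\beta'_{e_i})$ with $\mathcal{C}^{\mathbf{P'}}\not\equiv 0$, hence $\mathbf{Q'}_i\not\equiv 0$, while $|\mathcal{K}_{\mathrm{new}}|=\Delta+2=|\mathcal{K}_{\mathrm{old}}|$; this is exactly Claim~\ref{cl3}.

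The genuinely hard part will be the recolouring in the last step: one must push a Vizing-fan/alternating-chain recolouring through \emph{while keeping $\mathcal{C}^{\mathbf{P'}}$ not identically zero}, spending only the single free parameter $\beta_i$.  The naive count of colours blocked at $e_i$ --- up to $2\Delta-2$ from adjacent edges, plus the $\le 2$ roots of $\mathcal{C}^{\mathbf{P'}}$ in $e_i$ --- can reach $2\Delta$, which exceeds $\Delta+2$ once $\Delta\ge 3$, so no greedy colouring of $e_i$ is available and Vizing's theorem (which guarantees $\mathbf{E}_m\not\equiv 0$, i.e.\ $\chi'(G)\le\Delta+1$) is genuinely indispensable for organising the chain --- this is the precise sense in which Vizing's theorem enters.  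Two further points need care: controlling the interplay between the edge-adjacency constraints and the $\mathcal{C}^{\mathbf{P'}}$-constraint as the chain is recoloured edge by edge (this is where Lemma~\ref{l1} is invoked repeatedly, to bound the number of bad $\beta_i$ contributed by each step), and, throughout, keeping in mind that ``$\not\equiv 0$'' means non-vanishing as a function on $\mathbb{Z}_p$ --- a property \emph{not} preserved under products --- so that every passage from ``a product is non-zero'' to ``one of its factors is non-zero'' must be justified via the low degrees supplied by Lemma~\ref{l1} and the size of $p$.
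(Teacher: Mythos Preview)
Your reduction is sound: once each variable in $\mathbf{Q}_i$ carries degree $<p$ after substitution, $\mathbf{Q'}_i(\beta'_{e_1},\dots,\beta'_{e_i},e_{i+1},\dots,e_m)\not\equiv 0$ is indeed equivalent to the colouring being proper on $e_1,\dots,e_i$ together with $\mathcal{C}^{\mathbf{P'}}(\beta'_{e_1},\dots,\beta'_{e_i},e_{i+1},\dots,e_m)\not\equiv 0$.  The $F(t,\cdot)$ trick for swapping $\alpha_{i-1}\to\beta_i$ on the matching $A$ is also correct and handles the first $i-1$ edges at the cost of at most $2|A|$ forbidden values of $\beta_i$.

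The gap is exactly where you flag it, and your sketch of that step does not go through.  A Vizing fan/chain step recolours some edge $e_k$ from a colour $c$ to a colour $c'$, where typically neither $c$ nor $c'$ is $\beta_i$; whether $\mathcal{C}^{\mathbf{P'}}$ survives that change is then a yes/no question independent of $\beta_i$ (the only remaining dependence on $\beta_i$ is through the edges of $A$, which the chain may never touch), so it cannot ``rule out at most two values of $\beta_i$''.  Worse, the Vizing procedure itself depends on the current colouring, which depends on $\beta_i$ via $A$, so the entire chain may change discontinuously with $\beta_i$---there is no single low-degree polynomial in $\beta_i$ whose roots you are avoiding.  You have one scalar parameter but the recolouring imposes up to $m$ constraints of the form $\mathcal{C}^{\mathbf{P'}}(\text{new point})\not\equiv 0$ at points that do not vary polynomially in $\beta_i$.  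Since Hypothesis~\ref{h1} says \emph{every} proper $\mathcal{K}_{\mathrm{old}}$-colouring of $e_1,\dots,e_i$ kills $\mathcal{C}^{\mathbf{P'}}$, there is no reason a Vizing extension of $\beta'$ (followed by relabelling) should not do the same; this is the heart of the difficulty, and the single degree of freedom $\beta_i$ is not shown to suffice.

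The paper's proof takes a completely different, purely algebraic route with no combinatorial recolouring.  It builds auxiliary polynomials $\mathbf{G},\mathbf{H}_{e_j},\mathbf{J}$ around $\mathbf{Q}_{i-1}$; Lemma~\ref{l2} shows that under Hypothesis~\ref{h1} the reduced polynomial $\mathbf{G'}$ factors as $\prod_{e_j\in M_2}\prod_{l\neq\alpha_{i-1}}(e_j-l)$ times a residual piece in the remaining variables; Claim~\ref{cl2} then chooses $\beta_i$ so that each $(e_j-\beta_i)$ is a square-free factor of $\mathbf{G'}$ while $(e_i-\beta_i)\nmid\mathbf{G'}$; and Lemma~\ref{l3} lets one divide out $\prod_{e_j\in M_2}(e_j-\beta_i)$ and re-insert $\prod_{e_j\in M_2}(e_j-\alpha_{i-1})$ to obtain a polynomial $\mathbf{K}$ with $\mathbf{K'}\not\equiv 0$.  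A unique-leading-monomial argument for $\prod_{e_l\in N_i(e_i)}(e_i-e_l)$ then upgrades $\mathbf{K'}\not\equiv 0$ to $\mathbf{Q'}_i\not\equiv 0$ with the new $\alpha=\beta_i$.  Thus the swap $\alpha_{i-1}\to\beta_i$ is performed \emph{inside the factored form of $\mathbf{G'}$} rather than at the level of colourings, and the step analogous to your ``colour $e_i$'' is absorbed into the choice of $\beta_i$ in Claim~\ref{cl2} plus the leading-monomial argument, not into a Vizing extension.
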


\begin{REMARK}
	$$\textnormal{Let}\ F(x) = x\prod_{\gamma \in \mathbb{Z}_p\setminus\{0\}}(x-\gamma) \equiv  x(x^{p-1}-1) \equiv x^p - x.\ \textnormal{Then}\ F'(x) \equiv x-x \equiv 0.$$
\end{REMARK}
\begin{REMARK}
	In this remark we will give a brief view and explaination of the strategy adopted to prove Claim \ref{cl3}. Now we will focus in understanding the scenario when Hyphothesis \ref{h1} is true. From the Hypothesis \ref{h1} we have that $\mathbf{Q'}_{i-1}(\boldsymbol{e}) \not\equiv 0$ mod $p$, is nothing but, 
\begin{multline*}
\mathcal{C}^{{\mathbf{P'}}}(\boldsymbol{e})\prod_{j=1}^{i-1}\Big(\prod_{\substack{\textnormal{if}\ N_j(e_j) \neq \emptyset \\ e_l \in N_j(e_j)}}({e_j} - e_l)\prod_{k \in \mathbb{Z}_p\setminus\{1, 2, \ldots, \Delta+1, \alpha = \alpha_{i-1}\}}({e_j} - k)\Big)  \not\equiv 0\ \textnormal{mod}\ p.\\
\end{multline*}
And $\mathbf{Q'}_{i-1}(\boldsymbol{e})$ can be written as follows (after applying Feramt's theroem),
\begin{equation}
\label{req1} 
\mathbf{Q'}_{i-1}(\boldsymbol{e}) \equiv \sum_{\prod_{\substack{r=1\\r\neq i}}^ml_r}\mathcal{C}_{\prod_{\substack{r=1\\r\neq i}}^ml_r}(e_i)\prod_{\substack{r = 1\\r \neq i}}^{m}{e_r}^{l_r},
\end{equation}
where $\mathcal{C}_{\prod_{\substack{r=1\\r\neq i}}^ml_r}(e_i)$ (it is either an univariate polynomial in $e_i$ or constant) is the coefficient of $\prod_{\substack{r = 1\\r \neq i}}^{m}{e_r}^{l_r}$, exponent of each $e_r$ $(\substack{1 \leq r \leq m \\r\neq i})$ and $e_i$ is $\leq p-1$.\\
By the definition of $\mathbf{Q}_{i}(\boldsymbol{e})$, we have, 
\begin{equation*}
\mathbf{Q}_{i}(\boldsymbol{e}) = \mathbf{Q}_{i-1}(\boldsymbol{e})\mathbf{E}^{i}(e_i, e_{i+1}, \ldots, e_m)
\end{equation*}
$\mathbf{Q'}_{i}(\boldsymbol{e})$ can also be written as follows using relation (\ref{req1}),
\begin{equation*}
\mathbf{Q'}_{i}(\boldsymbol{e}) \equiv \Big(\sum_{\prod_{\substack{r=1\\r\neq i}}^ml_r}\mathcal{C}_{\prod_{\substack{r=1\\r\neq i}}^ml_r}(e_i)\prod_{\substack{r = 1\\r \neq i}}^{m}{e_r}^{l_r}\Big)\prod_{\substack{\textnormal{if}\ N_i(e_i) \neq \emptyset \\ e_j \in N_i(e_i)}}(e_i - e_j)\prod_{l \in \mathbb{Z}_p\setminus\mathcal{K} = \{1, 2, \dots, \Delta+1, \alpha = \alpha_{i-1}\}}(e_i - l),
\end{equation*}
\begin{equation*}
\mathbf{Q'}_{i}(\boldsymbol{e}) \equiv \Big(\sum_{\prod_{\substack{r=1\\r\neq i}}^ml_r}\big(\mathcal{C}_{\prod_{\substack{r=1\\r\neq i}}^ml_r}(e_i)\prod_{l \in \mathbb{Z}_p\setminus\mathcal{K} = \{1, 2, \dots, \Delta+1, \alpha = \alpha_{i-1}\}}(e_i - l)\big)\prod_{\substack{r = 1\\r \neq i}}^{m}{e_r}^{l_r}\Big)\prod_{\substack{\textnormal{if}\ N_i(e_i) \neq \emptyset \\ e_j \in N_i(e_i)}}(e_i - e_j),
\end{equation*}
where $\mathcal{C}_{\prod_{\substack{r=1\\r\neq i}}^ml_r}(e_i)$ is the coefficient of $\prod_{\substack{r = 1\\r \neq i}}^{m}{e_r}^{l_r}$, exponent of each $e_r$ $(\substack{1 \leq r \leq m \\r\neq i})$ is $\leq p-1$, and as Fermat's theorem has not been applied to $e_i$ yet, exponent of $e_i$ is $\leq p+\Delta-2$.\\
Further, $\mathbf{Q'}_{i}(\boldsymbol{e})$ can also be written as follows,
\begin{equation*}
\mathbf{Q'}_{i}(\boldsymbol{e}) \equiv \Big(\sum_{\prod_{\substack{r=1\\r\neq i}}^ml_r}\mathcal{C}_{\prod_{\substack{r=1\\r\neq i}}^ml_r}^1(e_i)\prod_{\substack{r = 1\\r \neq i}}^{m}{e_r}^{l_r}\Big)\prod_{\substack{\textnormal{if}\ N_i(e_i) \neq \emptyset \\ e_j \in N_i(e_i)}}(e_i - e_j),
\end{equation*}
where $\mathcal{C}_{\prod_{\substack{r=1\\r\neq i}}^ml_r}^1(e_i) = \mathcal{C}_{\prod_{\substack{r=1\\r\neq i}}^ml_r}(e_i)\prod_{l \in \mathbb{Z}_p\setminus\mathcal{K} = \{1, 2, \dots, \Delta+1, \alpha = \alpha_{i-1}\}}(e_i - l)$(it is an univariate polynomial in $e_i$).\\
We know that $\mathcal{C'}_{\prod_{\substack{r=1\\r\neq i}}^ml_r}^1(e_i)$ is a polynomial obtained after applying Fermat's theorem to the univariate polynomial $\mathcal{C}_{\prod_{\substack{r=1\\r\neq i}}^ml_r}^1(e_i)$, using this fact, $\mathbf{Q'}_{i}(\boldsymbol{e})$ can also be written as follows,
\begin{equation}
\label{eqt1}
\mathbf{Q'}_{i}(\boldsymbol{e}) \equiv \Big(\sum_{\prod_{\substack{r=1\\r\neq i}}^ml_r}\mathcal{C'}_{\prod_{\substack{r=1\\r\neq i}}^ml_r}^1(e_i)\prod_{\substack{r = 1\\r \neq i}}^{m}{e_r}^{l_r}\Big)\prod_{\substack{\textnormal{if}\ N_i(e_i) \neq \emptyset \\ e_j \in N_i(e_i)}}(e_i - e_j).
\end{equation}
Here, we must note that the only possible reason for $\mathbf{Q'}_{i}(\boldsymbol{e}) \equiv 0 $ mod $p$ (that is Hypothesis \ref{h1} is true) from the relation (\ref{eqt1}) is that each $\mathcal{C'}_{\prod_{\substack{r=1\\r\neq i}}^ml_r}^1(e_i) \equiv 0$ mod $p$, that is,
\begin{equation*}
\mathbf{Q'}_{i}(\boldsymbol{e}) \equiv \Big(\sum_{\prod_{\substack{r=1\\r\neq i}}^ml_r}\underbrace{\mathcal{C'}_{\prod_{\substack{r=1\\r\neq i}}^ml_r}^1(e_i)}_{\equiv\ 0\ \text{mod}\ p}\prod_{\substack{r = 1\\r \neq i}}^{m}{e_r}^{l_r}\Big)\prod_{\substack{\textnormal{if}\ N_i(e_i) \neq \emptyset \\ e_j \in N_i(e_i)}}(e_i - e_j) \equiv 0\ \text{mod}\ p.
\end{equation*}
Otherwise, suppose there exists a monomial $\prod_{\substack{r = 1\\r \neq i}}^{m}{e_r}^{l'_r}$ such that its coefficient $\mathcal{C'}_{\prod_{\substack{r=1\\r\neq i}}^ml'_r}^1(e_i) \not\equiv 0$ mod $p$ in the relation (\ref{eqt1}), that is,
\begin{equation*}
	\mathbf{Q'}_{i}(\boldsymbol{e}) \equiv \underbrace{\Big(\underbrace{\mathcal{C'}_{\prod_{\substack{r=1\\r\neq i}}^ml'_r}^1(e_i)}_{\not\equiv\ 0\ \text{mod}\ p}\prod_{\substack{r = 1\\r \neq i}}^{m}{e_r}^{l'_r} + \sum_{\prod_{\substack{r=1\\r\neq i}}^ml_r}\mathcal{C'}_{\prod_{\substack{r=1\\r\neq i}}^ml_r}^1(e_i)\prod_{\substack{r = 1\\r \neq i}}^{m}{e_r}^{l_r}\Big)}_{\text{we denote the underbrace polynomial by}\ \mathbf{W}_{i}(\boldsymbol{e})}\prod_{\substack{\textnormal{if}\ N_i(e_i) \neq \emptyset \\ e_j \in N_i(e_i)}}(e_i - e_j),
\end{equation*}
that is, 
\begin{equation*}
\mathbf{Q'}_{i}(\boldsymbol{e}) \equiv \mathbf{W}_{i}(\boldsymbol{e})\prod_{\substack{\textnormal{if}\ N_i(e_i) \neq \emptyset \\ e_j \in N_i(e_i)}}(e_i - e_j),
\end{equation*}
so we have that $\mathbf{W}_{i}(\boldsymbol{e}) \not\equiv 0$ mod $p$, to each variable $e_1$, $e_2$, \ldots, $e_m$ in $\mathbf{W}_{i}(\boldsymbol{e})$ we associate the sets $\mathcal{A}_1 = \mathbb{Z}_p,  \mathcal{A}_{2} = \mathbb{Z}_p, \ldots,$ $\mathcal{A}_{m} = \mathbb{Z}_p$ respectively. Then there exists $\gamma_{e_1} \in \mathcal{A}_1, \gamma_{e_2} \in \mathcal{A}_{2}, \ldots,$ $\gamma_{e_m} \in \mathcal{A}_{m}$ such that

\begin{equation*}
\mathbf{W}(\gamma_{e_1}, \gamma_{e_2}, \ldots, \gamma_{e_i}, \gamma_{e_{i+1}}, \ldots, \gamma_{e_m}) \not\equiv 0\ \textnormal{mod}\ p,
\end{equation*}
also following holds as well,
\begin{equation*}
\mathbf{W}(\gamma_{e_1}, \gamma_{e_2}, \ldots, \gamma_{e_i}, e_{i+1}, \ldots, {e_m}) \not\equiv 0\ \textnormal{mod}\ p.
\end{equation*}

So, we have 
\begin{equation*}
\mathbf{Q'}_{i}(\gamma_{e_1}, \gamma_{e_2}, \ldots, \gamma_{e_i}, e_{i+1}, \ldots, e_m) \equiv \mathbf{W}(\gamma_{e_1}, \gamma_{e_2}, \ldots, \gamma_{e_i}, e_{i+1}, \ldots, e_m)\prod_{\substack{\textnormal{if}\ N_i(e_i) \neq \emptyset \\ e_j \in N_i(e_i)}}(\gamma_{e_i} - e_j).
\end{equation*}
Let \ $\prod_{j = i+1}^ me_j^{l_j}$ ($0 \leq l_{j} \leq 2\Delta$) be a monomial of maximum degree in $\mathbf{W}(\gamma_{e_1}, \gamma_{e_2}, \ldots, \gamma_{e_i}, e_{i+1}, \ldots, e_m)$, and we have that   
$\prod_{e_j \in N_i(e_i)}e_j$  is the only monomial (unique) of maximum degree in $$\prod_{\substack{\textnormal{if}\ N_i(e_i) \neq \emptyset \\ e_j \in N_i(e_i)}}(\gamma_{e_i} - e_j).$$\\

The product of the above two monomials give a unique monomial of maximum degree whose coefficient is non-zero ($\not\equiv 0$ mod $p$) in $\mathbf{Q'}_{i}(\gamma_{e_1}, \gamma_{e_2}, \ldots, \gamma_{e_i}, e_{i+1}, \ldots, e_m)$. Therefore, $\mathbf{Q'}_{i}(\gamma_{e_1}, \gamma_{e_2}, \ldots, \gamma_{e_i}, e_{i+1}, \ldots, e_m)$ is not a zero polynomial, that is,
\begin{equation*}
\mathbf{Q'}_{i}(\gamma_{e_1}, \gamma_{e_2}, \ldots, \gamma_{e_i}, e_{i+1}, \ldots, e_m)\not\equiv 0\ \textnormal{mod}\ p,
\end{equation*}
so we can say that,
\begin{equation*}
\mathbf{Q'}_{i}(e_1, e_2, \ldots, e_i, e_{i+1}, \ldots, e_m)\not\equiv 0\ \textnormal{mod}\ p,
\end{equation*}
which implies that Hypothesis \ref{h1} is false.\\

So, if the Hypothesis \ref{h1} is true, we find a new value to $\alpha$, say $\alpha = \beta_i$. In the following paragraphs we explain the strategy adopted in detail to find $\beta_i$, especially, the paragraph after defining $\mathbf{H}_{e_j}(\boldsymbol{e})$ gives us brief view that how this makes sure that $\mathbf{Q'}_{i}(\boldsymbol{e}) \not\equiv 0 $ mod $p$.\\ 
\end{REMARK}

Now, our goal is to find a new value to $\alpha$, say $\alpha = \beta_i$. We will find a new value $\alpha = \beta_i$ using the polynomials $\mathbf{J}_{\prod_{\substack{r=1\\r\neq j}}^ml_r}(e_j)$ defined later. To define $\mathbf{J}_{\prod_{\substack{r=1\\r\neq j}}^ml_r}(e_j)$ we need to define the polynomials $\mathbf{G}(\boldsymbol{e})$ and $\mathbf{H}_{e_j}(\boldsymbol{e})$ as follows, we use the fact that $\mathbf{Q'}_{i-1}(\boldsymbol{e}) \not\equiv 0$ mod $p$, that is, 
\begin{multline}
\label{m7}
\mathcal{C}^{{\mathbf{P'}}}(\boldsymbol{e})\prod_{j=1}^{i-1}\Bigg(\prod_{\substack{\textnormal{if}\ N_j(e_j) \neq \emptyset \\ e_l \in N_j(e_j)}}({e_j} - e_l)\prod_{k \in \mathbb{Z}_p\setminus\{1, 2, \ldots, \Delta+1, \alpha = \alpha_{i-1}\}}({e_j} - k)\Bigg)  \not\equiv 0\ \textnormal{mod}\ p.\\
\end{multline}
\begin{equation*}
\mathbf{G}(\boldsymbol{e}) = \mathbf{Q}_{i-1}(\boldsymbol{e})\prod_{j=1}^{k}(e_{l_j}-\alpha_{i-1}),
\end{equation*}
where $e_{l_1}, e_{l_2}, \ldots, e_{l_k} \in \{e_1, e_2, \ldots, e_{i-1}\}$ $(1 \leq l_1, l_2, \ldots, l_k \leq i-1)$ such that 
$\mathbf{G'}(\boldsymbol{e}) \not\equiv 0$ mod $p$ and for every $e_j \in \{e_1, e_2, \ldots, e_{i-1}\}\setminus\{e_{l_1}, e_{l_2}, \ldots, e_{l_k}\}$ the polynomial $\mathbf{H'}_{e_j}(\boldsymbol{e}) \equiv 0$ mod $p$, where, 
\begin{equation*}
\mathbf{H}_{e_j}(\boldsymbol{e}) = \mathbf{G'}(\boldsymbol{e})(e_j-\alpha_{i-1}).\\
\end{equation*}

Our novel approach involves finding a new value to $\alpha$, say $\alpha =\beta_i$ $(\notin \{0, 1, 2, \ldots, \Delta+1, \alpha_{i-1}\}$), such that for each $e_j \in \{e_1, e_2, \ldots, e_{i-1}\}\setminus\{e_{l_1}, e_{l_2}, \ldots, e_{l_k}\}$, $(e_j-\beta_i)$ is a  square-free factor in $\mathbf{J}_{\prod_{\substack{r=1\\r\neq j}}^ml_r}(e_j)$ (defined later) and $(e_j-\beta_i)$ is a  square-free factor in $\mathbf{G'}(\boldsymbol{e})$ as well (by proving the Claim \ref{cl2} stated later). And for each $e_j \in \{e_1, e_2, \ldots, e_{i-1}\}\setminus\{e_{l_1}, e_{l_2}, \ldots, e_{l_k}\}$, the square-free factor $(e_j-\beta_i)$ in $\mathbf{G'}(\boldsymbol{e})$ is replaced by $\big(e_j- \alpha_{i-1}\big)$ to obtain the desired result (by proving the Claim \ref{cl3}). For the sake of notational simplicity, let $M_1 = \{e_{l_1}, e_{l_2}, \ldots, e_{l_k}\}$ and $M_2 = \{e_1, e_2, \ldots, e_{i-1}\}\setminus\{e_{l_1}, e_{l_2}, \ldots, e_{l_k}\}$ $(1 \leq l_1, l_2, \ldots, l_k \leq i-1)$. To choose a new value $\beta_i$, we are defining polynomial $\mathbf{J}_{\prod_{\substack{r=1\\r\neq j}}^ml_r}(e_j)$ as follows.\\

By the definition of $\mathbf{G}(\boldsymbol{e})$ and $\mathbf{Q}_{i-1}(\boldsymbol{e})$, $\mathbf{G}(\boldsymbol{e})$ can be written as follows,
\small{ 
	\begin{multline}
	\label{m8}
	\mathbf{G}(\boldsymbol{e}) = \Bigg(\mathcal{C}^{{\mathbf{P'}}}(\boldsymbol{e})\prod_{j=1}^{i-1}\Big(\prod_{\substack{\textnormal{if}\ N_j(e_j) \neq \emptyset \\ e_l \in N_j(e_j)}}({e_j} - e_l)\prod_{k \in \mathbb{Z}_p\setminus\{1, 2, \ldots, \Delta+1, \alpha = \alpha_{i-1}\}}({e_j} - k)\Big)\Bigg)\Bigg(\prod_{j=1}^{k}(e_{l_j}-\alpha_{i-1})\Bigg).
	\end{multline}}

Given an $e_j \in M_2$, the polynomial $\mathbf{G'}(\boldsymbol{e})$ can be written as (Fermat's theorem is applied to each $e_r$ ($1 \leq r \leq m $)),
\begin{equation}
\label{eq11}
\mathbf{G'}(\boldsymbol{e}) \equiv \sum_{\prod_{\substack{r=1\\r\neq j}}^ml_r}\mathcal{C}_{\prod_{\substack{r=1\\r\neq j}}^ml_r}(e_j)\prod_{\substack{r = 1\\r \neq j}}^{m}{e_r}^{l_r},
\end{equation}
where $\mathcal{C}_{\prod_{\substack{r=1\\r\neq j}}^ml_r}(e_j)$ is the coefficient of $\prod_{\substack{r = 1\\r \neq j}}^{m}{e_r}^{l_r}$, exponent of each $e_r$ $(\substack{1 \leq r \leq m \\r\neq j})$ and $e_j$ is $\leq p-1$.\\
Also we can observe that, given an $e_j \in M_2$, from the relation (\ref{m8}) 
$\mathbf{G'}(\boldsymbol{e})$
can also be written as (excepting $e_j$, Fermat's theorem is applied to each $e_r$ ($\substack{1 \leq r \leq m \\r\neq j})$), 
\begin{equation}
\label{m3}
\mathbf{G'}(\boldsymbol{e}) \equiv  \sum_{\prod_{\substack{r=1\\r\neq j}}^ml_r}\Big(\mathcal{C}^{j}_{\prod_{\substack{r=1\\r\neq j}}^ml_r}(e_j)\prod_{l\in \mathbb{Z}_p\setminus\{ 1, 2, \ldots, \Delta+1, \alpha=\alpha_{i-1}\}}(e_j-l)\Big)\prod_{\substack{r = 1\\r \neq j}}^{m}{e_r}^{l_r},
\end{equation}
where $0 \leq l_r \leq p-1$ $(\textnormal{as Fermat's theorem is applied to each}\ e_r\ (\substack{1 \leq r \leq m \\r\neq j}), \textnormal{except}\ e_j)$, $\mathcal{C}^j_{\prod_{\substack{r=1\\r\neq j}}^ml_r}(e_j)$ is a univariate polynomial of $e_j$ and exponent of $e_j$ in the following products, $$\Big(\mathcal{C}^j_{\prod_{\substack{r=1\\r\neq j}}^ml_r}(e_j)\prod_{l\in \mathbb{Z}_p\setminus\{ 1, 2, \ldots, \Delta+1, \alpha=\alpha_{i-1}\}}(e_j-l)\Big)\ \textnormal{is}\ \leq (2\Delta+p-(\Delta+2)) = p+\Delta-2.$$\\

The most important fact we observe from relation (\ref{eq11}) and relation (\ref{m3}) is that, on applying Fermat's theorem to $e_j$, 
\begin{equation}
\label{eq12}
\mathcal{C}^j_{\prod_{\substack{r=1\\r\neq j}}^ml_r}(e_j)\prod_{l\in \mathbb{Z}_p\setminus\{ 1, 2, \ldots, \Delta+1, \alpha=\alpha_{i-1}\}}(e_j-l) \equiv \mathcal{C}_{\prod_{\substack{r=1\\r\neq j}}^ml_r}(e_j),
\end{equation}
as both are coefficient of $\prod_{\substack{r = 1\\r \neq j}}^{m}{e_r}^{l_r}$.\\ \\

Now, for each $e_j \in M_2$, we define polynomial $\mathbf{J}_{\prod_{\substack{r=1\\r\neq j}}^ml_r}(e_j)$ using a non-zero coefficient of some monomial $\prod_{\substack{r = 1\\r \neq j}}^{m}{e_r}^{l_r}$ in the congruence relation (\ref{m3}) as follows,\\ \\

for each $e_j \in M_2$,
\begin{equation}
\label{eq3}
\mathbf{J}_{\prod_{\substack{r=1\\r\neq j}}^ml_r}(e_j) = \mathcal{C}^j_{\prod_{\substack{r=1\\r\neq j}}^ml_r}(e_j)\prod_{l\in \mathbb{Z}_p\setminus\{ 1, 2, \ldots, \Delta+1, \alpha=\alpha_{i-1}\}}(e_j-l),
\end{equation}
where, $$\mathcal{C}^j_{\prod_{\substack{r=1\\r\neq j}}^ml_r}(e_j)\prod_{l\in \mathbb{Z}_p\setminus\{ 1, 2, \ldots, \Delta+1, \alpha=\alpha_{i-1}\}}(e_j-l)\ \textnormal{is}\ $$ is the coefficient 
of some monomial $\prod_{\substack{r=1\\r\neq j}}^me_r^{l_r}$ in the congruence relation (\ref{m3}).\\
And $\mathbf{J}_{\prod_{\substack{r=1\\r\neq i}}^ml_r}(e_i)$ is defined below,
\begin{equation}
\label{eq10}
\mathbf{J}_{\prod_{\substack{r=1\\r\neq i}}^ml_r}(e_i) = \mathcal{C}^i_{\prod_{\substack{r=1\\r\neq i}}^ml_r}(e_i),
\end{equation}
where, $\mathcal{C}^i_{\prod_{\substack{r=1\\r\neq i}}^ml_r}(e_i)$ is the coefficient 
of some monomial $\prod_{\substack{r=1\\r\neq i}}^me_r^{l_r}$ in the congruence relation (\ref{m3}).\\

Now, with the help of the polynomials $\{\mathbf{J}_{\prod_{\substack{r=1\\r\neq j}}^ml_r}(e_j):e_j \in M_2\} \cup \{\mathbf{J}_{\prod_{\substack{r=1\\r\neq i}}^ml_r}(e_i)\}$ (as defined in (\ref{eq3}) and (\ref{eq10})), we will find the new value to $\alpha$, that is, $\alpha = \beta_i$, in the following claim,

\begin{CLAIM}
	\label{cl2}
	There exists $\beta_{i}\in \mathbb{Z}_p\setminus\{0, 1, 2, \ldots, \Delta+1, \alpha_{i-1}\}$ such that for each $e_j \in M_2$, $(e_j-\beta_i)$ divides $\mathbf{J}_{\prod_{\substack{r=1\\r\neq j}}^ml_r}(e_j)$ but $(e_j-\beta_i)^2$ does not divide $\mathbf{J}_{\prod_{\substack{r=1\\r\neq j}}^ml_r}(e_j)$. Moreover, for each $e_j \in M_2$, $(e_j-\beta_i)$ divides
	$\mathbf{G'}(\boldsymbol{e})$ 
	but $(e_j-\beta_i)^2$ does not divide $\mathbf{G'}(\boldsymbol{e})$. And also $(e_i-\beta_i)$ does not divide $\mathbf{J}_{\prod_{\substack{r=1\\r\neq i}}^ml_r}(e_i)$ and $\mathbf{G'}(\boldsymbol{e})$ 
	as well.
\end{CLAIM}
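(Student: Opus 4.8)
The plan is to produce the required $\beta_i$ by a counting argument over the field $\mathbb{Z}_p$: each condition in the claim will exclude only $O(\Delta m)$ elements of $\mathbb{Z}_p$, and since $p\ge m^2(2\Delta+2)$ is much larger, a legal value survives. For $e_j\in M_2$ abbreviate $\mathcal{C}^{(j)}=\mathcal{C}^{\,j}_{\prod_{r\ne j}l_r}(e_j)$ and $\mathbf{J}^{(j)}=\mathbf{J}_{\prod_{r\ne j}l_r}(e_j)$ for the polynomials appearing in~(\ref{eq3}), and $\mathcal{C}^{(i)}=\mathbf{J}^{(i)}=\mathbf{J}_{\prod_{r\ne i}l_r}(e_i)$ for the one in~(\ref{eq10}), this last coefficient being chosen nonzero (possible since $\mathbf{G'}\not\equiv0$). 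The only genuinely structural ingredient is a factorization of $\mathbf{G'}(\boldsymbol{e})$ in each variable $e_j\in M_2$, which I read off from the defining condition $\mathbf{H'}_{e_j}(\boldsymbol{e})\equiv0$.

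\emph{Step 1: factoring $\mathbf{G'}$.} First I would show that for every $e_j\in M_2$,
\[
\mathbf{G'}(\boldsymbol{e})\;=\;s_j\prod_{\gamma\in\mathbb{Z}_p\setminus\{\alpha_{i-1}\}}(e_j-\gamma)
\]
for some nonzero $s_j\in\mathbb{Z}_p[\boldsymbol{e}]$ not involving $e_j$. Indeed, $\mathbf{H'}_{e_j}\equiv0$ says precisely that $\mathbf{G'}(\boldsymbol{e})(e_j-\alpha_{i-1})$ Fermat-reduces to the zero polynomial; since $\mathbf{G'}$ is already Fermat-reduced, $\deg_{e_j}\mathbf{G'}\le p-1$, hence $\deg_{e_j}\big(\mathbf{G'}(e_j-\alpha_{i-1})\big)\le p$, and Fermat-reduction in the single variable $e_j$ is nothing but passage to the remainder under Euclidean division by the monic polynomial $e_j^p-e_j$ (over $\mathbb{Z}_p[e_r:r\ne j]$). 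Hence $e_j^p-e_j=(e_j-\alpha_{i-1})\prod_{\gamma\ne\alpha_{i-1}}(e_j-\gamma)$ divides $\mathbf{G'}(\boldsymbol{e})(e_j-\alpha_{i-1})$ in the integral domain $\mathbb{Z}_p[\boldsymbol{e}]$; cancelling the monic factor $(e_j-\alpha_{i-1})$ gives $\prod_{\gamma\ne\alpha_{i-1}}(e_j-\gamma)\mid\mathbf{G'}$, and comparing $e_j$-degrees ($p-1$ on the left, at most $p-1$ on the right with $\mathbf{G'}\not\equiv0$) forces the cofactor $s_j$ to be free of $e_j$. Since $\prod_{\gamma\ne\alpha_{i-1}}(e_j-\gamma)$ is squarefree in $e_j$ and $s_j$ involves no $e_j$, this already yields the ``Moreover'' part of the claim: for \emph{every} $\beta\ne\alpha_{i-1}$ one has $(e_j-\beta)\mid\mathbf{G'}$ but $(e_j-\beta)^2\nmid\mathbf{G'}$.

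\emph{Step 2: the rest as root-avoidance.} By~(\ref{eq3}), $\mathbf{J}^{(j)}=\mathcal{C}^{(j)}\prod_{l\in\mathbb{Z}_p\setminus\mathcal{K}}(e_j-l)$ with $\mathcal{K}=\{1,\dots,\Delta+1,\alpha_{i-1}\}$, where $\mathcal{C}^{(j)}$ is a nonzero univariate polynomial of degree at most $2\Delta$ (nonzero because $\mathbf{J}^{(j)}$ was taken to be a nonzero coefficient; the degree bound being the one recorded just after~(\ref{m3})). If $\beta_i\notin\mathcal{K}$, then $(e_j-\beta_i)$ is one of the distinct linear factors of $\prod_{l\in\mathbb{Z}_p\setminus\mathcal{K}}(e_j-l)$, so $(e_j-\beta_i)\mid\mathbf{J}^{(j)}$, whereas $(e_j-\beta_i)^2\mid\mathbf{J}^{(j)}$ iff $\mathcal{C}^{(j)}(\beta_i)=0$. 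Similarly $\mathbf{J}^{(i)}=\mathcal{C}^{(i)}$ is a nonzero univariate polynomial of degree at most $2\Delta$ (by Lemma~\ref{l1}, $e_i$ has exponent $\le2$ in $\mathcal{C}^{\mathbf{P'}}(\boldsymbol{e})$ and degree $\le1$ in each remaining factor of $\mathbf{G}$), and by~(\ref{eq10}) the scalar $\mathcal{C}^{(i)}(\beta_i)$ occurs as a coefficient of $\mathbf{G'}|_{e_i=\beta_i}$; hence if $\mathcal{C}^{(i)}(\beta_i)\ne0$ then $\mathbf{G'}|_{e_i=\beta_i}\not\equiv0$, so $(e_i-\beta_i)\nmid\mathbf{G'}$, and trivially $(e_i-\beta_i)\nmid\mathcal{C}^{(i)}=\mathbf{J}^{(i)}$ too.

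\emph{Step 3: counting.} It thus suffices to take $\beta_i\in\mathbb{Z}_p$ outside $\{0,1,\dots,\Delta+1,\alpha_{i-1}\}$, outside the root set of $\mathcal{C}^{(j)}$ for each $e_j\in M_2$, and outside the root set of $\mathcal{C}^{(i)}$: then $\beta_i\notin\{0,1,\dots,\Delta+1,\alpha_{i-1}\}$ delivers $\beta_i\notin\mathcal{K}$ and $\beta_i\ne\alpha_{i-1}$, while root-avoidance delivers the squarefree and non-divisibility assertions of Steps~1--2. The number of excluded elements is at most $(\Delta+3)+2\Delta(i-1)+2\Delta\le 2\Delta m+\Delta+3<p$, because $p\ge m^2(2\Delta+2)$; hence the desired $\beta_i$ exists, proving the claim. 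I expect the one delicate point to be Step~1 — converting the congruence $\mathbf{H'}_{e_j}\equiv0$ into the genuine polynomial factorization of $\mathbf{G'}$, especially the facts that Fermat-reduction in a single variable is exactly reduction modulo the monic $e_j^p-e_j$ and that the resulting cofactor cannot reintroduce $e_j$; once that factorization is in hand the remaining estimates are routine, and the hypothesis $p\ge m^2(2\Delta+2)$ leaves ample room.
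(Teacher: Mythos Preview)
Your proposal is correct and follows essentially the same route as the paper: the paper's proof of this claim rests on Lemma~\ref{l2}, whose content is exactly your Step~1 factorization $\mathbf{G'}=s_j\prod_{\gamma\ne\alpha_{i-1}}(e_j-\gamma)$, and then proceeds by the same root-avoidance counting over $\mathcal{B}=\mathbb{Z}_p\setminus\{0,1,\dots,\Delta+1,\alpha_{i-1}\}$. The only noteworthy difference is that the paper obtains the sharper bound $|\mathcal{B}_j|\le\Delta-1$ (by observing, via~(\ref{eq13}), that $\mathcal{C}^{(j)}$ must already vanish at $1,\dots,\Delta+1$, leaving at most $\Delta-1$ further roots), whereas you use the cruder $\le 2\Delta$; both fit comfortably under $p\ge m^2(2\Delta+2)$.
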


To prove the Claim \ref{cl2} we have to prove the following Lemma \ref{l2}, and to prove the Claim \ref{cl3} we have to prove the Lemma \ref{l3}. 

\begin{LEMMA}
	\label{l2}
	Given an $e_j \in M_2$, $\mathbf{H}_{e_j}(\boldsymbol{e}) = \mathbf{G'}(\boldsymbol{e})(e_j-\alpha_{i-1})$. For every $e_j \in M_2$, the polynomial $\mathbf{H'}_{e_j}(\boldsymbol{e}) \equiv 0$ mod $p$. Then  
	\begin{equation*}
	\mathbf{G'}(\boldsymbol{e}) \equiv \prod_{e_j \in M_2}\Big(\prod_{l \in \mathbb{Z}_p\setminus\{\alpha_{i-1}\}}(e_j-l)\Big)\bigg(\sum_{\prod_{\substack{r=1\\r \notin K}}^ml_r}\mathcal{C}_{\prod_{\substack{r=1\\r \notin K}}^ml_r}(e_i)\prod_{\substack{r = 1\\r \notin K}}^{m}{e_r}^{l_r}\bigg),
	\end{equation*}
	where, $K = \{s: e_s \in M_2\} \cup \{i\}$ and $\mathcal{C}_{\prod_{\substack{r=1\\r \notin K}}^ml_r}(e_i)$ is a univariate polynomial in $e_i$.
\end{LEMMA}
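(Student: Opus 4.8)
The plan is to extract the structure of $\mathbf{G'}(\boldsymbol{e})$ by exploiting the vanishing of $\mathbf{H'}_{e_j}$ one variable at a time. Fix $e_j \in M_2$. Since $\mathbf{G'}(\boldsymbol{e})$ is Fermat‑reduced, its $e_j$‑degree is at most $p-1$, so write
\[
\mathbf{G'}(\boldsymbol{e}) \equiv \sum_{d=0}^{p-1} g_d\, e_j^{\,d},
\]
where each $g_d$ is a (Fermat‑reduced) polynomial in the remaining variables $e_1,\dots,\widehat{e_j},\dots,e_m$. Multiplying by $(e_j-\alpha_{i-1})$ only raises the $e_j$‑degree, so $\mathbf{H}_{e_j}(\boldsymbol{e}) = \mathbf{G'}(\boldsymbol{e})(e_j-\alpha_{i-1})$ has $e_j$‑degree at most $p$, and forming $\mathbf{H'}_{e_j}$ merely replaces the single top term $g_{p-1}e_j^{\,p}$ by $g_{p-1}e_j$. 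Collecting powers of $e_j$, the coefficient of $e_j^{\,0}$ is $-\alpha_{i-1}g_0$, the coefficient of $e_j$ is $g_0 + g_{p-1} - \alpha_{i-1}g_1$, and the coefficient of $e_j^{\,d}$ for $2\le d\le p-1$ is $g_{d-1}-\alpha_{i-1}g_d$.

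Next I impose $\mathbf{H'}_{e_j}\equiv 0\ \textnormal{mod}\ p$, which forces each of these coefficient polynomials to vanish. Because $\alpha_{i-1}\in\mathbb{Z}_p\setminus\{0,1,\dots,\Delta+1\}$ is a nonzero scalar, the first relation gives $g_0\equiv 0$; the relations for $2\le d\le p-1$ give the downward recursion $g_{d-1}\equiv\alpha_{i-1}g_d$, hence $g_d\equiv\alpha_{i-1}^{\,p-1-d}g_{p-1}$ for $1\le d\le p-1$; and the remaining relation $g_{p-1}\equiv\alpha_{i-1}g_1$ is then automatic since $\alpha_{i-1}^{\,p-1}\equiv 1$. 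In particular $g_{p-1}\not\equiv 0$, as otherwise $\mathbf{G'}\equiv 0$, contradicting the choice of $e_{l_1},\dots,e_{l_k}$. Substituting back,
\[
\mathbf{G'}(\boldsymbol{e}) \equiv g_{p-1}\sum_{d=1}^{p-1}\alpha_{i-1}^{\,p-1-d}\,e_j^{\,d},
\]
and a short telescoping check gives $(e_j-\alpha_{i-1})\sum_{d=1}^{p-1}\alpha_{i-1}^{\,p-1-d}e_j^{\,d}\equiv e_j^{\,p}-e_j=\prod_{l\in\mathbb{Z}_p}(e_j-l)$, so $\sum_{d=1}^{p-1}\alpha_{i-1}^{\,p-1-d}e_j^{\,d}=\prod_{l\in\mathbb{Z}_p\setminus\{\alpha_{i-1}\}}(e_j-l)$. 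Thus $\prod_{l\in\mathbb{Z}_p\setminus\{\alpha_{i-1}\}}(e_j-l)$ divides $\mathbf{G'}(\boldsymbol{e})$, with cofactor $g_{p-1}$ free of $e_j$.

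Finally I run this for every $e_j\in M_2$. The factors $\prod_{l\in\mathbb{Z}_p\setminus\{\alpha_{i-1}\}}(e_j-l)$ for distinct $e_j\in M_2$ involve disjoint variable sets and are pairwise coprime in the UFD $\mathbb{Z}_p[\boldsymbol{e}]$, so since each divides $\mathbf{G'}$, their product $\prod_{e_j\in M_2}\prod_{l\in\mathbb{Z}_p\setminus\{\alpha_{i-1}\}}(e_j-l)$ divides $\mathbf{G'}$; and since $\deg_{e_j}\mathbf{G'}=p-1=\deg_{e_j}\prod_{l\ne\alpha_{i-1}}(e_j-l)$ for each $e_j\in M_2$, the cofactor $R(\boldsymbol{e})$ contains no variable from $M_2$. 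Grouping the monomials of $R$ by the variables $\{e_r:r\notin K\}$, with $K=\{s:e_s\in M_2\}\cup\{i\}$, and recording the coefficients as univariate polynomials in $e_i$, yields exactly the claimed identity. The two steps that need care are the consistency of the ``boundary'' relation (coefficient of $e_j$) with the downward recursion, which is where $\alpha_{i-1}^{\,p-1}\equiv 1$ is used, and the passage from divisibility by each single $\prod_{l\ne\alpha_{i-1}}(e_j-l)$ to divisibility by their product — this must go through the coprimality/UFD argument rather than a naive variable‑by‑variable substitution, and is the main obstacle.
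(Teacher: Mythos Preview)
Your proof is correct and follows essentially the same approach as the paper: both arguments show that the hypothesis $\mathbf{H'}_{e_j}\equiv 0$ forces the $e_j$-part of $\mathbf{G'}$ to be a scalar multiple of $\prod_{l\in\mathbb{Z}_p\setminus\{\alpha_{i-1}\}}(e_j-l)$, and then iterate over all $e_j\in M_2$. The paper groups $\mathbf{G'}$ by monomials in the \emph{other} variables and asserts directly that each univariate coefficient $\mathcal{C}_{\prod l_r}(e_j)$ must equal $b\prod_{l\ne\alpha_{i-1}}(e_j-l)$, whereas you group by powers of $e_j$ and solve the resulting linear recursion explicitly; your telescoping identity is precisely what justifies the paper's unproved assertion at that step, and your UFD/coprimality argument for passing to the product over $M_2$ is a cleaner substitute for the paper's informal ``we can rewrite'' passage.
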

\begin{LEMMA}
	\label{l3}
	\begin{equation*}
	\frac{\mathbf{G}(\boldsymbol{e})}{\prod_{e_j \in M_2}\Big(e_j-\beta_i\Big)} \equiv \frac{\mathbf{G'}(\boldsymbol{e})}{\prod_{e_j \in M_2}\Big(e_j-\beta_i\Big)},
	\end{equation*}
	that is, 
	\begin{multline*}
	\mathcal{C}^{\mathbf{P'}}(\boldsymbol{e}) \prod_{e_j \in M_1} \Big(\prod_{\substack{\textnormal{if}\ N_j(e_j) \neq \emptyset \\ e_l \in N_j(e_j)}}({e_j} - e_l)\prod_{k \in \mathbb{Z}_p\setminus\{1, 2, \ldots, \Delta+1\}}({e_j} - k)\Big) \\ \prod_{e_j \in M_2} \Big(\prod_{\substack{\textnormal{if}\ N_j(e_j) \neq \emptyset \\ e_l \in N_j(e_j)}}({e_j} - e_l)\prod_{k \in \mathbb{Z}_p\setminus\{1, 2, \ldots, \Delta+1, \alpha_{i-1}, \beta_i\}}({e_j} - k)\Big) \equiv \\ \prod_{e_j \in M_2}\Big(\prod_{l \in \mathbb{Z}_p\setminus\{\alpha_{i-1}, \beta_i\}}(e_j-l)\Big)\bigg(\sum_{\prod_{\substack{r=1\\r \notin K}}^ml_r}\mathcal{C}_{\prod_{\substack{r=1\\r \notin K}}^ml_r}(e_i)\prod_{\substack{r = 1\\r \notin K}}^{m}{e_r}^{l_r}\bigg),
	\end{multline*}
	where, $K = \{s: e_s \in M_2\} \cup \{i\}$ and $\mathcal{C}_{\prod_{\substack{r=1\\r \notin K}}^ml_r}(e_i)$ is a univariate polynomial in $e_i$.\\
\end{LEMMA}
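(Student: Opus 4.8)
The plan is to prove the displayed identity by computing $\mathbf{G}(\boldsymbol e)/\prod_{e_j\in M_2}(e_j-\beta_i)$ and $\mathbf{G'}(\boldsymbol e)/\prod_{e_j\in M_2}(e_j-\beta_i)$ separately and then matching the two explicit expressions modulo $p$. For the left side this is pure bookkeeping with the definitions: from $\mathbf{G}(\boldsymbol e)=\mathbf{Q}_{i-1}(\boldsymbol e)\prod_{j=1}^{k}(e_{l_j}-\alpha_{i-1})$ and $\mathbf{Q}_{i-1}(\boldsymbol e)=\mathcal{C}^{\mathbf{P'}}(\boldsymbol e)\prod_{j=1}^{i-1}\mathbf{E}^j(e_j,\ldots,e_m)$ I split the index product over $\{e_1,\ldots,e_{i-1}\}=M_1\cup M_2$ (this is relation~(\ref{m8})). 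For each $e_j\in M_1$ the extra factor $(e_j-\alpha_{i-1})$ merges with $\prod_{k\in\mathbb{Z}_p\setminus\{1,\ldots,\Delta+1,\alpha_{i-1}\}}(e_j-k)$ to give $\prod_{k\in\mathbb{Z}_p\setminus\{1,\ldots,\Delta+1\}}(e_j-k)$. For each $e_j\in M_2$, since Claim~\ref{cl2} furnishes $\beta_i\in\mathbb{Z}_p\setminus\{0,1,\ldots,\Delta+1,\alpha_{i-1}\}$, the linear form $(e_j-\beta_i)$ is literally one of the factors of $\prod_{k\in\mathbb{Z}_p\setminus\{1,\ldots,\Delta+1,\alpha_{i-1}\}}(e_j-k)$, so dividing it out replaces that sub-product by $\prod_{k\in\mathbb{Z}_p\setminus\{1,\ldots,\Delta+1,\alpha_{i-1},\beta_i\}}(e_j-k)$ and leaves everything else untouched. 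No Fermat reduction is invoked, and the outcome is precisely the first expression in the statement.

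For the right side, observe that the way $M_1$ (hence $M_2$) was chosen is exactly the hypothesis of Lemma~\ref{l2}: $\mathbf{G'}(\boldsymbol e)\not\equiv 0$ mod $p$ and $\mathbf{H'}_{e_j}(\boldsymbol e)\equiv 0$ mod $p$ for every $e_j\in M_2$. Lemma~\ref{l2} then gives $\mathbf{G'}(\boldsymbol e)\equiv\prod_{e_j\in M_2}\big(\prod_{l\in\mathbb{Z}_p\setminus\{\alpha_{i-1}\}}(e_j-l)\big)\big(\sum_{\prod_{r\notin K}l_r}\mathcal{C}_{\prod_{r\notin K}l_r}(e_i)\prod_{r\notin K}e_r^{l_r}\big)$ with $K=\{s:e_s\in M_2\}\cup\{i\}$. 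By Claim~\ref{cl2} each $(e_j-\beta_i)$ with $e_j\in M_2$ divides $\mathbf{G'}(\boldsymbol e)$, and since $\beta_i\ne\alpha_{i-1}$ it occurs simply inside $\prod_{l\in\mathbb{Z}_p\setminus\{\alpha_{i-1}\}}(e_j-l)$; cancelling it for all $e_j\in M_2$ turns the last display into $\prod_{e_j\in M_2}\big(\prod_{l\in\mathbb{Z}_p\setminus\{\alpha_{i-1},\beta_i\}}(e_j-l)\big)\big(\sum_{\prod_{r\notin K}l_r}\mathcal{C}_{\prod_{r\notin K}l_r}(e_i)\prod_{r\notin K}e_r^{l_r}\big)$, which is the second expression in the statement, i.e.\ the value of $\mathbf{G'}(\boldsymbol e)/\prod_{e_j\in M_2}(e_j-\beta_i)$.

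It remains to match the two sides, i.e.\ to show $\mathbf{G}(\boldsymbol e)/\prod_{e_j\in M_2}(e_j-\beta_i)\equiv\mathbf{G'}(\boldsymbol e)/\prod_{e_j\in M_2}(e_j-\beta_i)$ mod $p$, and this is where I expect the main obstacle to sit. Put $D=\prod_{e_j\in M_2}(e_j-\beta_i)$, $q_1=\mathbf{G}/D$, $q_2=\mathbf{G'}/D$; both are genuine polynomials by the two steps above together with Claim~\ref{cl2}. Since $\mathbf{G'}$ is the Fermat reduction of $\mathbf{G}$ we have $q_1D\equiv q_2D$ mod $p$, and the point is to cancel $D$ — which is \emph{not} automatic, because each $(e_j-\beta_i)$ is a zero-divisor modulo the Fermat ideal. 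To push it through I would use the structure produced above: $q_2$ is already Fermat-reduced with $\deg_{e_j}(q_2)=p-2$ for $e_j\in M_2$; from the explicit product form of $q_1$, Lemma~\ref{l1} (the exponent of $e_j$ in $\mathcal{C}^{\mathbf{P'}}$ is $\le 2$) and the bound $|N(e_j)|\le 2\Delta-2$ on the edge-difference factors touching $e_j$ give $\deg_{e_j}(q_1)\le p+\Delta-3<2(p-1)$, so the coefficient of $e_j^{p-1}$ in the Fermat reduction $q_1'$ equals the one in $q_1$ itself. The genuinely delicate step is then to show that this coefficient vanishes for every $e_j\in M_2$ — here the precise choice of $\beta_i$ in Claim~\ref{cl2} and the way the factors in $\mathbf{G}$ interact have to be exploited carefully. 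Granting $\deg_{e_j}(q_1')\le p-2$, the difference $q_1'-q_2$ is Fermat-reduced, has $e_j$-degree $\le p-2$ for every $e_j\in M_2$, and vanishes at every point of $\mathbb{Z}_p^m$ whose $M_2$-coordinates all avoid $\beta_i$ (because $q_1$ and $q_2$ agree as functions wherever $D\ne 0$); a one-variable polynomial of degree $\le p-2$ with $p-1$ distinct roots is identically zero, so inducting on $|M_2|$ forces $q_1'=q_2$, i.e.\ $\mathbf{G}/D\equiv\mathbf{G'}/D$ mod $p$. Everything except the vanishing of that top coefficient is routine manipulation with the definitions and Lemmas~\ref{l1}--\ref{l2}.
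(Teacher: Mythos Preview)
Your two preparatory computations---rewriting $\mathbf G(\boldsymbol e)$ by splitting $\{e_1,\ldots,e_{i-1}\}$ into $M_1\cup M_2$ and absorbing the extra $(e_{l_j}-\alpha_{i-1})$ factors, and rewriting $\mathbf G'(\boldsymbol e)$ via Lemma~\ref{l2}---match the paper's proof exactly (these are relations~(\ref{m9}), (\ref{m10}), (\ref{m11}) there). The divergence is in the final step: the paper does \emph{not} attempt anything like your degree/root-counting argument. After displaying the two explicit quotients it simply invokes Claim~\ref{cl2} (that each $(e_j-\beta_i)$ divides $\mathbf G'(\boldsymbol e)$ square-freely, and $(e_i-\beta_i)$ does not divide it) and asserts ``Therefore, we can conclude that $\mathbf G/D\equiv\mathbf G'/D$'', with no further justification.

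You are right to flag this as the crux. The passage from $q_1D\equiv q_2D$ (mod the Fermat ideal) to $q_1\equiv q_2$ is not automatic, precisely for the reason you give: each $(e_j-\beta_i)$ is a zero-divisor modulo $\langle e_1^p-e_1,\ldots,e_m^p-e_m\rangle$, and square-freeness of the factor in $\mathbf G'$ alone does not license cancellation. Your attempted repair---bounding $\deg_{e_j}(q_1)\le p+\Delta-3$, noting that the $e_j^{p-1}$ coefficient survives Fermat reduction intact, and then reducing everything to the vanishing of that top coefficient---is more than the paper offers, but as you honestly record, the ``genuinely delicate step'' of showing that coefficient vanishes is left open. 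It is not visibly forced by Claim~\ref{cl2} or Lemma~\ref{l2} as stated, so the gap you isolate in your own argument is in fact a gap shared with the paper's proof.
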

Now, we have built all the necessary theory to define a formal algorithm (see page 12, Algorithm 1) that defines the steps of finding a $m$-tuple $(\beta_{e_1}, \beta_{e_2}, \ldots, \beta_{e_m})$, $\beta_{e_j}\in \mathcal{K}  = \{1, 2, \dots, \Delta+1, \alpha\}$, such that $\mathbf{Q}_m(\beta_{e_1}, \beta_{e_2}, \ldots, \beta_{e_m}) \not\equiv 0$ mod $p$. To start the flow in the algorithm, we have to prove the following Claim.\\
\begin{CLAIM}
	\label{cl1}
	There exists $(\beta_{e_1}, \beta_{e_2}, \ldots, \beta_{e_m})$, $\beta_{e_j}\in \mathcal{K} = \{1, 2, \dots, \Delta+1, \alpha = \Delta+2\}$, such that $\mathbf{Q'}_1(\beta_{e_1}, \beta_{e_2}, \ldots, \beta_{e_m}) \not\equiv 0$ mod $p$. That is, $\mathbf{Q'}_1(\boldsymbol{e}) \not\equiv 0$ mod $p$.
\end{CLAIM}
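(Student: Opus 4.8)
\noindent\emph{Sketch of the intended argument.}
The plan is to reduce the claim to a statement purely about $\mathcal{C}^{\mathbf{P'}}$ and then to combine Theorem \ref{t1}, Lemma \ref{l1} and Vizing's theorem. Since $\mathbf{Q}_1(\boldsymbol{e}) = \mathcal{C}^{\mathbf{P'}}(\boldsymbol{e})\,\mathbf{E}^1(e_1, e_2, \ldots, e_m)$ and $\mathbf{Q}_1$ agrees with $\mathbf{Q'}_1$ at every point of $\mathbb{Z}_p^m$, it suffices to produce an $m$-tuple $(\beta_{e_1}, \ldots, \beta_{e_m})$ with every $\beta_{e_j}\in\mathcal{K}$ at which $\mathbf{Q}_1$ is non-zero; one such point forces $\mathbf{Q'}_1(\boldsymbol{e})\not\equiv 0$ mod $p$. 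Now $\mathbf{E}^1$ is non-zero at $(\beta_{e_1},\ldots,\beta_{e_m})$ exactly when $\beta_{e_1}\in\mathcal{K}$ (which makes every factor $(\beta_{e_1}-l)$ with $l\in\mathbb{Z}_p\setminus\mathcal{K}$ non-zero) and $\beta_{e_1}\neq\beta_{e_j}$ for each $e_j\in N(e_1)$. Hence the claim is equivalent to: \emph{there is a point of $\mathcal{K}^m$ at which $\mathcal{C}^{\mathbf{P'}}$ is non-zero and whose $e_1$-coordinate differs from the coordinates indexed by the neighbours of $e_1$.}

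\medskip
For this I would first fix, by Vizing's theorem, a reference proper edge colouring $c\colon E(G)\to\{1,2,\ldots,\Delta+1\}\subseteq\mathcal{K}$; the corresponding point satisfies $\mathbf{E}_m\not\equiv 0$ and a fortiori $\mathbf{E}^1\not\equiv 0$ there, and since it uses only $\Delta+1$ of the $\Delta+2$ available colours a colour of $\mathcal{K}$ is always free at $e_1$. Next I would invoke Theorem \ref{t1} and Lemma \ref{l1}: $\mathcal{C}^{\mathbf{P'}}(\boldsymbol{e})\not\equiv 0$ mod $p$ and its degree in each $e_k$ is at most $2$. Viewing $\mathcal{C}^{\mathbf{P'}}$ as a polynomial in $e_1$ with coefficients in $\mathbb{Z}_p[e_2,\ldots,e_m]$, some coefficient $a(e_2,\ldots,e_m)$ is not identically zero and still has all variable-degrees $\le 2 < |\mathcal{K}|$; choosing values $\beta_{e_2},\ldots,\beta_{e_m}\in\mathcal{K}$ one variable at a time and avoiding, at each step, the at most two roots of the non-zero univariate restriction obtained so far (a Combinatorial-Nullstellensatz / counting step, valid because $|\mathcal{K}|=\Delta+2>2$) yields $(\beta_{e_2},\ldots,\beta_{e_m})\in\mathcal{K}^{m-1}$ with $a(\beta_{e_2},\ldots,\beta_{e_m})\neq 0$, so that $g(e_1):=\mathcal{C}^{\mathbf{P'}}(e_1,\beta_{e_2},\ldots,\beta_{e_m})$ is a non-zero univariate polynomial of degree $\le 2$, with at most two roots in $\mathbb{Z}_p$. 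One then picks $\beta_{e_1}\in\mathcal{K}$ that is neither a root of $g$ nor equal to any $\beta_{e_j}$ with $e_j\in N(e_1)$; then $\mathcal{C}^{\mathbf{P'}}(\beta_{e_1},\ldots,\beta_{e_m})\neq 0$ and $\mathbf{E}^1(\beta_{e_1},\ldots,\beta_{e_m})\neq 0$, so $\mathbf{Q'}_1(\beta_{e_1},\ldots,\beta_{e_m})=\mathbf{Q}_1(\beta_{e_1},\ldots,\beta_{e_m})\not\equiv 0$ mod $p$, which is the claim.

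\medskip
The delicate point — and the step I expect to be the main obstacle — is exactly this last choice of $\beta_{e_1}$: the colours forbidden by the neighbours of $e_1$ number up to $|N(e_1)|\le 2\Delta-2$, which together with the $\le 2$ roots of $g$ can exceed $|\mathcal{K}|=\Delta+2$, so the two steps above cannot be carried out independently. The values $\beta_{e_2},\ldots,\beta_{e_m}$ must be chosen in harmony with the reference colouring $c$: one should keep the colours appearing on $N(e_1)$ confined to at most $\Delta$ values (as in a proper $(\Delta+1)$-edge-colouring, since the edges of $N(e_1)$ sharing an endpoint of $e_1$ are pairwise differently coloured), while simultaneously keeping $\mathcal{C}^{\mathbf{P'}}(e_1,\beta_{e_2},\ldots,\beta_{e_m})$ a non-zero polynomial in $e_1$ and ensuring that at least one of the remaining colours of $\mathcal{K}$ is not a root of $g$ — if necessary, re-selecting the monomial $\prod_j v_j^{l_j}$ supplied by Theorem \ref{t1} so that the roots of $g$ fall on already-forbidden neighbour colours or outside $\mathcal{K}$. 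Proving that these requirements can always be met at once is where Vizing's theorem is genuinely used, and is the heart of the argument.
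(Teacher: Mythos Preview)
Your proposal aims at the wrong target, and the obstacle you yourself flag is a genuine gap in your approach. Despite the confusing first sentence of the claim, what the paper actually proves (and all that is used downstream, cf.\ Hypothesis~\ref{h1}) is only the second sentence: $\mathbf{Q'}_1(\boldsymbol{e})\not\equiv 0\pmod p$ \emph{as a polynomial}. You are trying to exhibit a point of $\mathcal{K}^m$ at which $\mathbf{Q}_1$ is non-zero, which is strictly stronger and is precisely where your ``delicate point'' bites: after fixing $\beta_{e_2},\ldots,\beta_{e_m}\in\mathcal{K}$, the set of colours forbidden for $e_1$ by its neighbours can have size up to $2\Delta-2$, and together with the $\le 2$ roots of $g$ this may exhaust $\mathcal{K}$. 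Your suggested remedies (coordinating with a Vizing colouring, re-selecting the monomial from Theorem~\ref{t1}) are not carried out, and there is no reason to expect they can be made to work simultaneously; indeed the rest of the paper exists exactly because one cannot in general force a $\mathcal{K}^m$-point directly.

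The paper's argument sidesteps all of this by substituting for $e_1$ only. Since $\deg_{e_1}\mathcal{C}^{\mathbf{P'}}\le 2$ (Lemma~\ref{l1}) and $|\mathcal{K}|=\Delta+2\ge 3$, some $\beta_{e_1}\in\mathcal{K}$ keeps $\mathcal{C}^{\mathbf{P'}}(\beta_{e_1},e_2,\ldots,e_m)\not\equiv 0$ as a polynomial in $e_2,\ldots,e_m$. Then
\[
\mathbf{Q}_1(\beta_{e_1},e_2,\ldots,e_m)=\mathcal{C}^{\mathbf{P'}}(\beta_{e_1},e_2,\ldots,e_m)\cdot\prod_{e_j\in N_1(e_1)}(\beta_{e_1}-e_j)\cdot\prod_{l\in\mathbb{Z}_p\setminus\mathcal{K}}(\beta_{e_1}-l),
\]
and the right-hand side is a product of non-zero elements of the integral domain $\mathbb{Z}_p[e_2,\ldots,e_m]$ (the last factor is a non-zero scalar because $\beta_{e_1}\in\mathcal{K}$). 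Hence $\mathbf{Q}_1(\beta_{e_1},e_2,\ldots,e_m)\not\equiv 0$, which already gives $\mathbf{Q'}_1(\boldsymbol{e})\not\equiv 0\pmod p$. No appeal to Vizing's theorem is needed for this claim; Vizing enters only later, in the discussion surrounding Remark~3.2 and the existence of the $\mathbf{Q}_i$'s.
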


\begin{algorithm}
	\DontPrintSemicolon
	\SetAlgoLined
	{ 
		$i \leftarrow 1$, $\alpha_1=\Delta+2$.\\
		\While {$i \leq m$}
		{
			\BlankLine
			\If {there exists a point $(\beta_{e_1}, \beta_{e_2}, \ldots, \beta_{e_m})$ $(\beta_{e_j}\in \mathcal{K} = \{1, 2, \dots, \Delta+1, \alpha = \alpha_i\})$ such that $\mathbf{Q}_i(\beta_{e_1}, \beta_{e_2}, \ldots, \beta_{e_m}) \not\equiv 0$ mod $p$}
			{
				Go to Step 14.\\
			}
			\Else 
			{
				\tcc{\small{there does not exists a point $(\beta_{e_1}, \beta_{e_2}, \ldots, \beta_{e_m})$ $(\beta_{e_j}\in \mathcal{K} = \{1, 2, \dots, \Delta+1,$ $ \alpha = \alpha_{i-1}\})$ such that $\mathbf{Q}_i(\beta_{e_1}, \beta_{e_2}, \ldots, \beta_{e_m}) \not\equiv 0$ mod $p$. In other words, $\mathbf{Q'}_i(\boldsymbol{e}) \equiv 0$ mod $p$, is nothing but
						\begin{equation*}
						\mathcal{C}^{{\mathbf{P'}}}(\boldsymbol{e})\prod_{j=1}^{i}\Bigg(\prod_{\substack{\textnormal{if}\ N_j(e_j) \neq \emptyset \\ e_l \in N_j(e_j)}}({e_j} - e_l)\prod_{k \in \mathbb{Z}_p\setminus\{1, 2, \ldots, \Delta+1, \alpha_{i-1}\}}({e_j} - k)\Bigg)  \equiv 0\ \textnormal{mod}\ p
						\end{equation*}
				}}
				\small{Define,
					\begin{equation*}
					\mathbf{G}(\boldsymbol{e}) = \mathbf{Q}_{i-1}(\boldsymbol{e})\prod_{j=1}^{k}(e_{l_j}-\alpha_{i-1}),
					\end{equation*}
					\begin{equation*}
					\text{For every}\ e_j \in M_2, \text{define}\ \mathbf{H}_{e_j}(\boldsymbol{e}) = \mathbf{G'}(\boldsymbol{e})(e_j-\alpha_{i-1}).
					\end{equation*}}\\
				\tcc{\small{where $e_{l_1}, e_{l_2}, \ldots, e_{l_k} \in \{e_1, e_2, \ldots, e_i\}$ such that 
						$\mathbf{G'}(\boldsymbol{e}) \not\equiv 0$ mod $p$ and for every $e_j \in M_2$ the polynomial $\mathbf{H'}_{e_j}(\boldsymbol{e}) \equiv 0$ mod $p$}}
				
				Find $\beta_{i}\in \mathbb{Z}_p\setminus\{0, 1, 2, \ldots, \Delta+1, \alpha =\alpha_i\}$ such that for each $e_j \in M_2$, $(e_j-\beta_i)$ divides $\mathbf{G'}(\boldsymbol{e})$ 
				but $(e_j-\beta_i)^2$ does not divide $\mathbf{G'}(\boldsymbol{e})$. And also $e_i - \beta_i$ does not divide $\mathbf{G'}(\boldsymbol{e})$.\\
				\tcc{for each $e_j \in M_2$, $(e_j-\beta_i)$ is a  square-free factor in $\mathbf{G'}(\boldsymbol{e})$}
				\small{\begin{multline*}
					\mathbf{K}(\boldsymbol{e}) = \Bigg(\Bigg(\frac{\mathbf{G'}(\boldsymbol{e})}{\prod_{e_j \in M_2}(e_{j}-\beta_i)}\Bigg)\prod_{e_j \in M_2}\Big(e_{j}-\alpha_{i-1}\Big)\Bigg)\prod_{l\in \mathbb{Z}_p\setminus\{ 1, 2, \ldots, \Delta+1, \alpha=\beta_i\}}(e_i-l).\\
					\end{multline*}}\\
				\tcc{\small{for each $e_j \in M_2$, the square-free factor $(e_j-\beta_i)$ in $\mathbf{G'}(\boldsymbol{e})$  is replaced by $\big(e_j- \alpha_{i-1}\big)$}}
				There exists $(\beta'_{e_1}, \beta'_{e_2}, \ldots, \beta'_{e_{i-1}}, \beta'_{e_i}, e_{i+1}, \ldots, e_m)$ 
				$(\beta'_{e_j}\in \mathcal{K} = \{1, 2, \dots, \Delta+1, \alpha = \beta_i\})$ such that $\mathbf{K}(\beta'_{e_1}, \beta'_{e_2}, \ldots, \beta'_{e_{i-1}}, \beta'_{e_i}, e_{i+1}, \ldots, e_m) \not\equiv 0$ \textnormal{mod} $p$.\\
				
				There exists $(\beta'_{e_1}, \beta'_{e_2}, \ldots, \beta'_{e_{i-1}}, \beta'_{e_i}, e_{i+1}, \ldots, e_m)$
				$(\beta'_{e_j}\in \mathcal{K} = \{1, 2, \dots, \Delta+1, \alpha = \beta_i\})$ such that $\mathbf{Q}_i(\beta'_{e_1}, \beta'_{e_2}, \ldots, \beta'_{e_{i-1}}, \beta'_{e_i}, e_{i+1}, \ldots, e_m) \not\equiv 0$ mod $p$.\\
				\tcc{\small{In other words,
						\begin{equation*}
						\mathcal{C}^{{\mathbf{P'}}}(\boldsymbol{e})\prod_{j=1}^{i}\Bigg(\prod_{\substack{\textnormal{if}\ N_j(e_j) \neq \emptyset \\ e_l \in N_j(e_j)}}({e_j} - e_l)\prod_{k \in \mathbb{Z}_p\setminus\{1, 2, \ldots, \Delta+1, \beta_i\}}({e_j} - k)\Bigg) \not\equiv 0\ \textnormal{mod}\ p
						\end{equation*}
				}}
				$\alpha_i \leftarrow \beta_{i}$.\\
				}
		$i \leftarrow i+1$.\\
		$\alpha_i \leftarrow \alpha_{i-1}$.\\
		}
		\textbf{Stop}
	}
	\caption{ \small{The algorithm defines the steps of finding a $m$-tuple $(\beta_{e_1}, \beta_{e_2}, \ldots, \beta_{e_m})$ $(\beta_{e_j}\in \mathcal{K}  = \{1, 2, \dots, \Delta+1, \alpha\})$ such that $\mathbf{Q}_m(\beta_{e_1}, \beta_{e_2}, \ldots, \beta_{e_m}) \not\equiv 0$ mod $p$. Without loss of generality, we assume $\alpha_1 = \Delta+2$.}}
\end{algorithm}

\setlength{\textfloatsep}{1pt}

Actually, by proving the Claim \ref{cl1}, Claim \ref{cl2} and Claim \ref{cl3}, we have established the following result.\\
\begin{RESULT}
	\label{cl4}
	There exists $(\beta_{e_1}, \beta_{e_2}, \ldots, \beta_{e_m})$, $\beta_{e_j}\in \mathcal{K} = \{1, 2, \dots, \Delta+1, \alpha \}$ $(1\leq j \leq m)$, such that $\mathbf{Q}_m(\beta_{e_1}, \beta_{e_2}, \ldots, \beta_{e_m}) \not\equiv 0$ mod $p$. That is,\\
	\begin{multline*}
	\mathcal{C}^{{\mathbf{P'}}}(\boldsymbol{e})\prod_{j=1}^{m}\Bigg(\prod_{\substack{\textnormal{if}\ N_j(e_j) \neq \emptyset \\ e_l \in N_j(e_j)}}({e_j} - e_l)\prod_{k \in \mathbb{Z}_p\setminus\{1, 2, \ldots, \Delta+1,\alpha\}}({e_j} - k)\Bigg)  \not\equiv 0\ \textnormal{mod}\ p.\\
	\end{multline*}
\end{RESULT}

Finally, to establish that for a given graph,  the \textit{total chromatic number} $\chi''(G)$ is bounded above by $\Delta +2$, we prove the following theorem,

\begin{THEOREM}
	\label{pt4}
	There exists $(\beta_{e_1}, \beta_{e_2}, \ldots,$ $\beta_{e_m})$, $\beta_{e_i} \in \{1, 2, \ldots, \Delta +1, \alpha\}$ $(1\leq i \leq m)$, such that $\mathcal{C}^{\mathbf{P'}}(\beta_{e_1}, \beta_{e_2}, \ldots,$ $\beta_{e_m}) \not\equiv 0$ mod $p$, and $(\beta_{v_1}, \beta_{v_2}, \ldots, \beta_{v_n})$, $\beta_{v_i} \in \{1, 2, \ldots, \Delta +1\}$ $(1\leq i \leq n)$, such that $\mathbf{P'}(\beta_{v_1}, \beta_{v_2}, \ldots, \beta_{v_n}, \beta_{e_1}, \beta_{e_2}, \ldots, \beta_{e_m}) \not\equiv 0$ mod $p$.
\end{THEOREM}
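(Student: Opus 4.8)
The plan is to obtain Theorem~\ref{pt4} as the ``wrap‑up'' step, feeding Result~\ref{cl4} into the structural ``forcing factor'' $\prod_{l=\Delta+2}^{p}(v_i-l)$ that sits inside every group of $\mathbf{P}$. First I would invoke Result~\ref{cl4} to produce an $m$-tuple $(\beta_{e_1},\dots,\beta_{e_m})$ with $\beta_{e_j}\in\mathcal{K}=\{1,2,\dots,\Delta+1,\alpha\}$ such that $\mathbf{Q}_m(\beta_{e_1},\dots,\beta_{e_m})\not\equiv 0\bmod p$. Since $\mathbf{Q}_m(\boldsymbol{e})=\mathcal{C}^{\mathbf{P'}}(\boldsymbol{e})\,\mathbf{E}_m(\boldsymbol{e})$ by construction, a nonzero product forces each factor to be nonzero at this point, so $\mathcal{C}^{\mathbf{P'}}(\beta_{e_1},\dots,\beta_{e_m})\not\equiv 0\bmod p$ (and also $\mathbf{E}_m(\beta_{e_1},\dots,\beta_{e_m})\not\equiv 0\bmod p$, which is what certifies the edge‑coloring part elsewhere). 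This already establishes the first assertion of the theorem.

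Next I would translate the nonvanishing of the coefficient into nonvanishing of a polynomial in the vertex variables. By Theorem~\ref{t1}, $\mathcal{C}^{\mathbf{P'}}(\boldsymbol{e})$ is by definition the coefficient of a fixed monomial $\prod_{j=1}^{n}v_j^{l_j}$ in $\mathbf{P'}(v_1,\dots,v_n,e_1,\dots,e_m)$, with each $l_j\le p-1$ (as $\mathbf{P'}$ is Fermat‑reduced). Hence, substituting $e_i\mapsto\beta_{e_i}$, the polynomial $g(v_1,\dots,v_n):=\mathbf{P'}(v_1,\dots,v_n,\beta_{e_1},\dots,\beta_{e_m})$ has the monomial $\prod_{j=1}^{n}v_j^{l_j}$ with coefficient $\mathcal{C}^{\mathbf{P'}}(\beta_{e_1},\dots,\beta_{e_m})\not\equiv 0\bmod p$; in particular $g$ is a nonzero polynomial over $\mathbb{Z}_p$, and substituting constants for the $e_i$'s does not change the $v_j$-degrees, so each $v_j$ still occurs in $g$ with exponent $\le p-1 < |\mathbb{Z}_p|$.

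Now I would argue that such a $g$ cannot vanish identically on $\mathbb{Z}_p^{\,n}$. Write $g=\sum_{k=0}^{p-1}g_k(v_1,\dots,v_{n-1})v_n^{k}$; pick $k$ with $g_k\not\equiv 0$, by induction on $n$ choose $(\beta_{v_1},\dots,\beta_{v_{n-1}})$ with $g_k(\beta_{v_1},\dots,\beta_{v_{n-1}})\not\equiv 0$, and then, since $g(\beta_{v_1},\dots,\beta_{v_{n-1}},v_n)$ is a nonzero univariate polynomial of degree $\le p-1$ over $\mathbb{Z}_p$ with at most $p-1<p$ roots, choose $\beta_{v_n}$ avoiding them. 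This yields $(\beta_{v_1},\dots,\beta_{v_n})\in\mathbb{Z}_p^{\,n}$ with $\mathbf{P'}(\beta_{v_1},\dots,\beta_{v_n},\beta_{e_1},\dots,\beta_{e_m})\not\equiv 0\bmod p$. Finally, since Fermat's theorem preserves evaluations, $\mathbf{P}(\beta_{v_1},\dots,\beta_{v_n},\beta_{e_1},\dots,\beta_{e_m})\equiv\mathbf{P'}(\dots)\not\equiv 0\bmod p$; as $\mathbf{P}$ is a product, the factor $\prod_{l=\Delta+2}^{p}(\beta_{v_i}-l)=\prod_{\gamma\in\mathbb{Z}_p\setminus\{1,\dots,\Delta+1\}}(\beta_{v_i}-\gamma)$ is nonzero for each $i$, which forces $\beta_{v_i}\in\{1,2,\dots,\Delta+1\}$, completing the proof.

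Given Result~\ref{cl4}, this argument is essentially bookkeeping; the one point that genuinely needs care is the passage from ``$g$ is a nonzero reduced polynomial'' to ``$g$ does not vanish identically on $\mathbb{Z}_p^{\,n}$,'' where one must exploit that every variable has degree strictly less than $p$ in $g$ (invoking the iterated root‑counting / Combinatorial‑Nullstellensatz nonvanishing lemma) rather than merely that $g$ is a nonzero formal polynomial — over a finite field the latter alone is not enough. The genuinely difficult content of the paper lies upstream, in Result~\ref{cl4} and the Claims~\ref{cl1}--\ref{cl3} together with their supporting lemmas; Theorem~\ref{pt4} itself should present no serious obstacle.
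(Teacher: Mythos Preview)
Your proposal is correct and follows essentially the same route as the paper's proof: invoke Result~\ref{cl4} to get $\mathbf{Q}_m(\beta_{e_1},\dots,\beta_{e_m})=\mathcal{C}^{\mathbf{P'}}(\beta_{e_1},\dots,\beta_{e_m})\mathbf{E}_m(\beta_{e_1},\dots,\beta_{e_m})\not\equiv 0$, deduce $\mathcal{C}^{\mathbf{P'}}(\beta_{e_1},\dots,\beta_{e_m})\not\equiv 0$ and hence $\mathbf{P'}(v_1,\dots,v_n,\beta_{e_1},\dots,\beta_{e_m})\not\equiv 0$, then pick $(\beta_{v_1},\dots,\beta_{v_n})\in\mathbb{Z}_p^{\,n}$ at which this polynomial is nonzero and use the forcing factor $\prod_{l=\Delta+2}^{p}(v_i-l)$ in $\mathbf{P}$ to conclude $\beta_{v_i}\in\{1,\dots,\Delta+1\}$. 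Your version is in fact slightly more careful than the paper's, since you spell out the degree-less-than-$p$ root-counting argument that justifies passing from ``nonzero reduced polynomial'' to ``nonvanishing at some point of $\mathbb{Z}_p^{\,n}$,'' which the paper leaves implicit.
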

The Theorem \ref{pt4} will have the following corollary. 
\begin{COROLLARY}
	\label{pc1}
	For any graph $G$, $\chi''(G)\leq \Delta + 2$.
\end{COROLLARY}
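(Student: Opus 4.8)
The plan is to derive the bound directly from Theorem~\ref{pt4} and Result~\ref{cl4} by reading off an explicit total coloring from a single common non-vanishing point of the master polynomial $\mathbf{T}$. First I would record the structural identity that, by construction, $\mathbf{T}(v_1,\ldots,v_n,\boldsymbol{e}) = \mathbf{P}(v_1,\ldots,v_n,\boldsymbol{e})\cdot\mathbf{E}_m(\boldsymbol{e})$, since the first product defining $\mathbf{T}$ is precisely $\mathbf{P}$ and the second is precisely $\mathbf{E}_m=\prod_{i=1}^m\mathbf{E}^i$. Hence it suffices to produce a tuple $(\beta_{v_1},\ldots,\beta_{v_n},\beta_{e_1},\ldots,\beta_{e_m})$ with all $\beta_{v_i}\in\{1,\ldots,\Delta+1\}$ and all $\beta_{e_j}\in\mathcal{K}$ at which $\mathbf{P}$ and $\mathbf{E}_m$ are simultaneously non-zero mod $p$.

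To obtain such a tuple I would start from Result~\ref{cl4}, which gives $(\beta_{e_1},\ldots,\beta_{e_m})$ with each $\beta_{e_j}\in\mathcal{K}$ and $\mathbf{Q}_m(\beta_{e_1},\ldots,\beta_{e_m})\not\equiv 0\bmod p$. Unwinding the recursion $\mathbf{Q}_1=\mathcal{C}^{\mathbf{P'}}\mathbf{E}^1$, $\mathbf{Q}_i=\mathbf{Q}_{i-1}\mathbf{E}^i$ gives $\mathbf{Q}_m(\boldsymbol{e})=\mathcal{C}^{\mathbf{P'}}(\boldsymbol{e})\,\mathbf{E}_m(\boldsymbol{e})$, so a non-zero product in the field $\mathbb{Z}_p$ forces both factors non-zero: $\mathbf{E}_m(\beta_{e_1},\ldots,\beta_{e_m})\not\equiv 0\bmod p$ and $\mathcal{C}^{\mathbf{P'}}(\beta_{e_1},\ldots,\beta_{e_m})\not\equiv 0\bmod p$. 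For this very edge-tuple, Theorem~\ref{pt4} supplies $(\beta_{v_1},\ldots,\beta_{v_n})$ with each $\beta_{v_i}\in\{1,\ldots,\Delta+1\}$ and $\mathbf{P'}(\beta_{v_1},\ldots,\beta_{v_n},\beta_{e_1},\ldots,\beta_{e_m})\not\equiv 0\bmod p$; and since $\mathbf{P'}$ differs from $\mathbf{P}$ only by the reductions $v_j^p\equiv v_j$, $e_j^p\equiv e_j$, the two agree as functions on $\mathbb{Z}_p$, so $\mathbf{P}(\beta_{v_1},\ldots,\beta_{v_n},\beta_{e_1},\ldots,\beta_{e_m})\not\equiv 0\bmod p$. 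Combining these,
\[
\mathbf{T}(\beta_{v_1},\ldots,\beta_{v_n},\beta_{e_1},\ldots,\beta_{e_m}) \equiv \mathbf{P}(\beta_{v_1},\ldots,\beta_{v_n},\beta_{e_1},\ldots,\beta_{e_m})\,\mathbf{E}_m(\beta_{e_1},\ldots,\beta_{e_m}) \not\equiv 0 \bmod p .
\]

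The final step is to interpret this non-vanishing combinatorially. I would define $f:V(G)\cup E(G)\to\mathcal{K}$ by $f(v_i)=\beta_{v_i}$ and $f(e_j)=\beta_{e_j}$. A product over $\mathbb{Z}_p$ is non-zero exactly when every factor is, so each factor of $\mathbf{T}$ at our tuple yields one of the defining conditions: the factors $\prod_{l=\Delta+2}^{p}(v_i-l)$ force $\beta_{v_i}\in\mathbb{Z}_p\setminus\{0,\Delta+2,\ldots,p-1\}=\{1,\ldots,\Delta+1\}\subseteq\mathcal{K}$ and the factors $\prod_{l\in\mathbb{Z}_p\setminus\mathcal{K}}(e_i-l)$ force $\beta_{e_i}\in\mathcal{K}$, so $f$ is well defined into $\mathcal{K}$; the factors $(v_i-v_j)$ with $v_j\in N_i(v_i)$ give $f(u)\neq f(v)$ for all adjacent $u,v$, since the decomposition $N_1(v_1)=N(v_1)$, $N_i(v_i)=N(v_i)\setminus\{v_1,\ldots,v_{i-1}\}$ records each edge $\{v_i,v_j\}$ exactly once (at its endpoint of smaller index); the factors $(e_i-e_j)$ with $e_j\in N_i(e_i)$ give $f(e)\neq f(e')$ for all adjacent edges by the analogous decomposition; and the factors $(v_i-e_j)$ with $e_j\in N_e(v_i)$ give $f(v)\neq f(e)$ whenever $v$ is an endpoint of $e$. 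Thus $f$ is a total coloring of $G$ using at most $|\mathcal{K}|=\Delta+2$ colors, so $\chi''(G)\leq\Delta+2$.

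Granting the earlier results, this argument is essentially bookkeeping, and the only place I would be careful is the subtlety flagged in the remark after the definition of $\mathbf{T}$: one must \emph{not} infer the simultaneous non-vanishing of $\mathbf{P}$ and $\mathbf{E}_m$ from the fact that each is a non-zero polynomial. That is precisely why everything is routed through $\mathbf{Q}_m=\mathcal{C}^{\mathbf{P'}}\mathbf{E}_m$ rather than treated factor-by-factor, and why the bulk of the work — Hypothesis~\ref{h1}, Claims~\ref{cl1}, \ref{cl2}, \ref{cl3}, Lemma~\ref{l1} controlling the $e$-degrees in $\mathcal{C}^{\mathbf{P'}}$, and Theorem~\ref{pt4} itself — is spent guaranteeing that a single edge-tuple in $\mathcal{K}^m$ makes $\mathbf{E}_m$ and $\mathcal{C}^{\mathbf{P'}}$ non-zero at once, and that the vertex variables can then be pinned down inside $\{1,\ldots,\Delta+1\}$. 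Once that is in hand, Corollary~\ref{pc1} is just the observation that such a non-vanishing point of $\mathbf{T}$ is literally a $(\Delta+2)$-total coloring of $G$.
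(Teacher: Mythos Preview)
Your proposal is correct and follows essentially the same approach as the paper: use Result~\ref{cl4} to obtain an edge-tuple in $\mathcal{K}^m$ at which $\mathbf{Q}_m=\mathcal{C}^{\mathbf{P'}}\mathbf{E}_m$ is non-zero (hence a proper edge coloring with $\mathcal{C}^{\mathbf{P'}}$ non-zero), then invoke Theorem~\ref{pt4} to choose the vertex colors in $\{1,\ldots,\Delta+1\}$, and read off the total coloring from the non-vanishing factors. The only difference is cosmetic: you route the conclusion explicitly through the factorization $\mathbf{T}=\mathbf{P}\cdot\mathbf{E}_m$ and verify $\mathbf{T}\not\equiv 0$ at the tuple, whereas the paper's proof of the corollary states directly that the map $f(v_i)=\beta_{v_i}$, $f(e_i)=\beta_{e_i}$ is a total coloring without naming $\mathbf{T}$ again.
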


Theorem \ref{t1} will guarantee that $\mathbf{P'}(v_1, v_2, \ldots, v_n, e_1, e_2, \ldots, e_m) \not\equiv 0$ mod $p$. That is, $\mathbf{P'}(v_1, v_2, \ldots, v_n, e_1, e_2, \ldots, e_m)$ is not a zero polynomial. Result \ref{cl4} not only guarantee that all the edges of the given graph are properly colored using $\Delta+2$ colors $(\beta_{e_i} \in \{1, 2, \ldots, \Delta +1, \alpha\})$, it also proves that $\mathcal{C}^{\mathbf{P'}}(\beta_{e_1}, \beta_{e_2}, \ldots,$ $\beta_{e_m}) \not\equiv 0$ mod $p$. Theorem \ref{pt4} and Corollary \ref{pc1} will guarantee that the  \textit{total chromatic number} $\chi''(G)$ of the given graph is bounded above by $\Delta + 2$ colors. That is, the mapping $f(v_i) = \beta_{v_i}$($1 \leq i \leq n$) and $f(e_i) = \beta_{e_i}$($1 \leq i \leq m$) will establish the desired result of proving the \textit{total chromatic number} $\chi''(G)$ of the given graph  to have a upper bound of $\Delta + 2$.\\

\section{Proofs of a remark, lemmas, claims, theorems, and a corollary}

\begin{proof} [Proof of Theorem \ref{t1}]
	Without loss of generality, let $e_i = 0$ (for all $i$, $1\leq i \leq m$) in $\mathbf{P}(v_1, v_2, \ldots, v_n, e_1, e_2, \ldots, e_m)$.
	By this we get, 
	
	\begin{equation*}
	\label{e1}
	\mathbf{P}(v_1, v_2, \ldots, v_n, 0, 0, \ldots, 0) = \prod_{i=1}^{n}\ \Big( \prod_{\substack{\textnormal{if}\ N_i(v_i) \neq \emptyset \\ v_j \in N_i(v_i)}}(v_i - v_j)\prod_{e_j \in N_e(v_i)}v_i\prod_{l = \Delta + 2}^{p}(v_i-l)\Big).
	\end{equation*}
	Here, finding $\alpha_{v_i} \in \{1, 2, \ldots, \Delta + 1\}$ such that 
	$\mathbf{P}(\alpha_{v_1}, \alpha_{v_2}, \ldots,\alpha_{v_n}, 0, 0, \ldots, 0) \not\equiv 0$ mod $p$ is nothing but obtaining the \textit{vertex coloring} of the given graph. That is, 
	the map $f(v_i) = \alpha_{v_i}$ ($1\leq i \leq n$) defines the \textit{vertex coloring} of the given graph.\\
	
	We already know from Brooks' theorem, $\chi(G) = \Delta + 1$, and this implies $\mathbf{P'}(v_1, v_2, \ldots,$ $v_n, 0, 0, \ldots, 0) \not\equiv 0$ mod $p$. We can also see, $\mathbf{P'}(v_1, v_2,$ $\ldots, v_n, e_1, e_2, \ldots, e_m) \not\equiv 0$ mod $p$. Therefore, there exists a coefficient $\mathcal{C}^{\mathbf{P'}}(\boldsymbol{e})$($\not\equiv 0$ mod $p$) of \ $\prod_{j=1}^{n}v_{j}^{l_j}$(\ for some \ $l_1\geq0, l_2\geq0, \ldots, l_n\geq0$) in $\mathbf{P'}(v_1, v_2, \ldots, v_n, e_1, e_2, \ldots, e_m)$.
\end{proof}

\begin{proof}[Proof of Lemma \ref{l1}]
	Given an edge $e_k = \{v_i, v_j\}$, $e_k$ is incident to vertices $v_i$ and $v_j$. Therefore, exponents of $e_k$'s ($1 \leq k \leq m$) in $\mathbf{P}(v_1, v_2, \ldots, v_n, e_1, e_2, \ldots, e_m)$ is always less than or equal to 2. So the same holds true for the coefficient $\mathcal{C}^{\mathbf{P'}}(\boldsymbol{e})$.
\end{proof}

\begin{proof} [Proof of Remark \ref{pl2}]
	Given an edge $e_i = \{v_s, v_t\}$, we have $\mathbf{S}_i = \{e_{l_1}, e_{l_2}, \ldots, e_{l_r} :e_{l_j} \in N_e(v_s), 1 \leq l_1, l_2, \dots, l_r \leq m\}$ and $|\mathbf{S}_i| \leq r \leq \Delta$. And we have\\	
	\begin{equation*}
	\mathbf{Z}_i(\boldsymbol{e}) = \mathcal{C}^{\mathbf{P'}}(\boldsymbol{e}) \prod_{1 \leq j < k \leq r}(e_{l_j} - e_{l_k})
	\prod_{e_{l_j} \in \mathbf{S}_i}(\prod_{l = \Delta + 3}^p(e_{l_j} - l)).
	\end{equation*}
	
	By Lemma \ref{l1}, exponent of each variable in $\mathcal{C}^{\mathbf{P'}}(\boldsymbol{e})$ is $\leq 2$. And exponent of each variable in  $$\prod_{1 \leq j < k \leq r}(e_{l_j} - e_{l_k})
	\prod_{e_{l_j} \in \mathbf{S}_i}(\prod_{l = \Delta + 3}^p(e_{l_j} - l))$$ is $\leq p-3$. Therefore, there always exists a monomial $\prod_{j=1}^r e_{l_j}^{s_j}$ (for some $s_j \geq 0$) in $\mathbf{Z}_i(\boldsymbol{e})$ whose coefficient is $\not\equiv 0$ mod $p$, this implies  $\mathbf{Z}_i(\boldsymbol{e}) \not\equiv 0$ mod $p$.	
\end{proof}

\begin{proof} [Proof of Claim \ref{cl1}]
	Without loss of generality, we assume $\alpha_1 = \Delta+2$. 
	By Lemma \ref{l1}, exponent of each variable $e_j$ in $\mathcal{C}^{\mathbf{P'}}(\boldsymbol{e})$ is $\leq 2$. $\mathcal{C}^{\mathbf{P'}}(\boldsymbol{e})$ can be rewritten as, $$\mathcal{C}^{\mathbf{P'}}(\boldsymbol{e}) = \sum_{j = 0}^{2}a_j(e_{2}, \ldots, e_m)e_1^j,$$ where $a_j(e_{2}, e_{3}, \ldots, e_m)$ is either a polynomial 
	in $e_{2}, e_{3}, \ldots, e_m$ or constant.\\
	To the variable $e_1$, we associate the set $\mathcal{A}_1 = \mathcal{K} = \{1, 2, \ldots, \Delta+1, \alpha = \Delta+2\}$. Then there exists $\beta_{e_1} \in \mathcal{A}_1$ such that $\mathcal{C}^{\mathbf{P'}}(\beta_{e_1}, e_{2}, \ldots, e_m) \not\equiv 0$ mod $p$.\\
	
	We get, $$\mathbf{Q}_{1}(\beta_{e_1}, e_2, \ldots, e_m) = \mathcal{C}^{\mathbf{P'}}(\beta_{e_1}, e_{2}, \ldots, e_m)\prod_{\substack{\textnormal{if}\ N_1(e_1) \neq \emptyset \\ e_j \in N_1(e_1)}}(\beta_{e_1} - e_j)\prod_{l \in \mathbb{Z}_p\setminus \{1, 2, \ldots, \Delta +1, \Delta +2\}}(\beta_{e_1} - l).$$
	Let $\prod_{j = 2}^ me_j^{l_j}$ ($0 \leq l_{j} \leq 2$) be a monomial of maximum degree in $\mathcal{C}^{\mathbf{P'}}(\beta_{e_1}, e_{2}, \ldots, e_m)$, and we have that  
	$\prod_{ e_j \in N_1(e_1)}e_j$  is the only monomial (unique) of maximum degree in $$\prod_{\substack{\textnormal{if}\ N_1(e_1) \neq \emptyset \\ e_j \in N_1(e_1)}}(\beta_{e_1} - e_j).$$ 
	The product of the above two monomials give a unique monomial of maximum degree in 
	$\mathbf{Q}_{1}(\beta_{e_1}, e_2, \ldots, e_m)$.\\
	
	So,  $\mathbf{Q}_{1}(\beta_{e_1}, e_2, \ldots, e_m) \not\equiv 0$ mod $p$. This implies, $\mathbf{Q'}_{1}({e_1}, e_2, \ldots, e_m) \not\equiv 0$ mod $p$.
	
\end{proof}
\begin{proof}[Proof of Lemma \ref{l2}]
	Given an $e_j \in M_2$, from the relation (\ref{eq11}) we have,
	\begin{equation*}
	\mathbf{G'}(\boldsymbol{e}) \equiv \sum_{\prod_{\substack{r=1\\r\neq j}}^ml_r}\mathcal{C}_{\prod_{\substack{r=1\\r\neq j}}^ml_r}(e_j)\prod_{\substack{r = 1\\r \neq j}}^{m}{e_r}^{l_r},
	\end{equation*}
	where $\mathcal{C}_{\prod_{\substack{r=1\\r\neq j}}^ml_r}(e_j)$ is the coefficient of $\prod_{\substack{r = 1\\r \neq j}}^{m}{e_r}^{l_r}$, and is a univariate polynomial in $e_j$, and exponent of $e_j$ and each $e_r$ $(\substack{1 \leq r \leq m \\r\neq j})$ is $\leq p-1$.\\ 
	
	So, given an $e_j \in M_2$, using the relation (\ref{eq11}), $\mathbf{H}_{e_j}(\boldsymbol{e}) = \mathbf{G'}(\boldsymbol{e})(e_j-\alpha_{i-1})$ can be rewritten as, 
	\begin{equation}
	\label{m6}
	\mathbf{H}_{e_j}(\boldsymbol{e}) \equiv \Big( \sum_{\prod_{\substack{r=1\\r\neq j}}^ml_r}\mathcal{C}_{\prod_{\substack{r=1\\r\neq j}}^ml_r}(e_j)\prod_{\substack{r = 1\\r \neq j}}^{m}{e_r}^{l_r}\Big)\big(e_j-\alpha_{i-1}\big).
	\end{equation}\\

	Let $\mathbf{L}_{{\prod_{\substack{r =1 \\ r \neq j}}^m l_r}}(e_{j}) = \mathcal{C}_{\prod_{\substack{r=1\\r\neq j}}^ml_r}(e_j)(e_j-\alpha_{i-1})$,\\
	where $\mathcal{C}_{\prod_{\substack{r=1\\r\neq j}}^ml_r}(e_j)$ is the coefficient of $\prod_{\substack{r = 1\\r \neq j}}^{m}{e_r}^{l_r}$ in $\mathbf{G'}(\boldsymbol{e}).$
	
	Given an $e_j \in M_2$, we have, 
	$\mathbf{H'}_{e_j}({e_1}, {e_{2}}, \ldots, {e_m}) \equiv 0$. In other words, we can say that, for every coefficient 
	$\mathcal{C}_{\prod_{\substack{r=1\\r\neq j}}^ml_r}(e_j)$ in $\mathbf{G'}(\boldsymbol{e})$ (congruence relation (\ref{eq11})),
	\begin{equation}
	\label{eq1}  
	\mathbf{L'}_{{\prod_{\substack{r =1 \\ r \neq j}}^m l_r}}(e_{j}) \equiv 0\ \textnormal{mod}\ p.
	\end{equation}
	\\
	
	Since, $\mathbf{L'}_{{\prod_{\substack{r =1 \\ r \neq j}}^m l_r}}(e_{j}) \equiv 0$ mod $p$, $\mathcal{C}_{\prod_{\substack{r=1\\r\neq j}}^ml_r}(e_j)$ is the coefficient of $\prod_{\substack{r = 1\\r \neq j}}^{m}{e_r}^{l_r}$ and exponent of $e_j$ is $\leq p-1$, we can conclude that, 
	\begin{equation}
	\label{eq2}
	\mathcal{C}_{\prod_{\substack{r=1\\r\neq j}}^ml_r}(e_j) \equiv b^j_{\prod_{\substack{r=1\\r\neq j}}^ml_r}\prod_{l \in \mathbb{Z}_p\setminus\{\alpha_{i-1}\}}(e_j-l),
	\end{equation}
	where $b^j_{\prod_{\substack{r=1\\r\neq j}}^ml_r} \in \mathbb{Z}_p\setminus\{0\}$.\\

	From the above congruence relations (\ref{eq1}) and (\ref{eq2}), we can rewrite the polynomial $\mathbf{G'}(\boldsymbol{e})$ as follows, 
	\begin{equation*}
	\mathbf{G'}(\boldsymbol{e}) \equiv \sum_{\prod_{\substack{r=1\\r\neq j}}^ml_r}\Bigg(b^j_{\prod_{\substack{r=1\\r\neq j}}^ml_r}\prod_{l \in \mathbb{Z}_p\setminus\{\alpha_{i-1}\}}(e_j-l)\Bigg)\prod_{\substack{r = 1\\r \neq j}}^{m}{e_r}^{l_r},
	\end{equation*}
	where $b^j_{\prod_{\substack{r=1\\r\neq j}}^ml_r} \in \mathbb{Z}_p\setminus\{0\}$.\\ 
	So, 
	\begin{equation}
	\label{m4}
	\mathbf{G'}(\boldsymbol{e}) \equiv \prod_{l \in \mathbb{Z}_p\setminus\{\alpha_{i-1}\}}(e_j-l)\bigg(\sum_{\prod_{\substack{r=1\\r\neq j}}^ml_r}b^j_{\prod_{\substack{r=1\\r\neq j}}^ml_r}\prod_{\substack{r = 1\\r \neq j}}^{m}{e_r}^{l_r}\bigg),
	\end{equation}
	where $b^j_{\prod_{\substack{r=1\\r\neq j}}^ml_r} \in \mathbb{Z}_p\setminus\{0\}$.\\ \\
	
	Since, for every $e_j \in M_2$, the polynomial $\mathbf{H'}_{e_j}(\boldsymbol{e}) \equiv 0$ mod $p$, we can rewrite the polynomial $\mathbf{G'}(\boldsymbol{e})$ as follows, 
	\begin{equation}
	\label{m5}
	\mathbf{G'}(\boldsymbol{e}) \equiv \prod_{e_j \in M_2}\Big(\prod_{l \in \mathbb{Z}_p\setminus\{\alpha_{i-1}\}}(e_j-l)\Big)\bigg(\sum_{\prod_{\substack{r=1\\r \notin K}}^ml_r}\mathcal{C}_{\prod_{\substack{r=1\\r \notin K}}^ml_r}(e_i)\prod_{\substack{r = 1\\r \notin K}}^{m}{e_r}^{l_r}\bigg),
	\end{equation}
	where, $K = \{s: e_s \in M_2\} \cup \{i\}$ and $\mathcal{C}_{\prod_{\substack{r=1\\r \notin K}}^ml_r}(e_i)$ is a univariate polynomial in $e_i$.\\
	
\end{proof}
\begin{proof}[Proof of Claim \ref{cl2} ]
	We have, $\mathcal{K} = \{1, 2, \ldots, \Delta, \Delta+1\} \cup \{\alpha = \alpha_{i-1}\}$ and we have to find a new value to $\alpha$.\\

	Now, to find a new value to $\alpha$, that is, $\alpha = \beta_i$ such that for each $e_j \in M_2$, $(e_j-\beta_i)$ divides
	$\mathbf{G'}(\boldsymbol{e})$ 
	but $(e_j-\beta_i)^2$ does not divide $\mathbf{G'}(\boldsymbol{e})$, we consider the polynomials $\{\mathbf{J}_{\prod_{\substack{r=1\\r\neq j}}^ml_r}(e_j):e_j \in M_2\} \cup \{\mathbf{J}_{\prod_{\substack{r=1\\r\neq i}}^ml_r}(e_i)\}$ (as defined in (\ref{eq3}) and (\ref{eq10})).\\
	
	For each $e_j \in M_2$, we have
	\begin{equation*}
	\mathbf{J}_{\prod_{\substack{r=1\\r\neq j}}^ml_r}(e_j) = \mathcal{C}^j_{\prod_{\substack{r=1\\r\neq j}}^ml_r}(e_j)\prod_{l\in \mathbb{Z}_p\setminus\{ 1, 2, \ldots, \Delta+1, \alpha_{i-1}\}}(e_j-l),
	\end{equation*}
	using the congruence relations (\ref{eq12}) and (\ref{eq2}), we get 
	\begin{equation}
	\label{eq13}
	\mathbf{J'}_{\prod_{\substack{r=1\\r\neq j}}^ml_r}(e_j) \equiv b^j_{\prod_{\substack{r=1\\r\neq j}}^ml_r}\prod_{l \in \mathbb{Z}_p\setminus\{\alpha_{i-1}\}}(e_j-l),
	\end{equation}
	where, $b^j_{\prod_{\substack{r=1\\r\neq j}}^ml_r} \in \mathbb{Z}_p\setminus\{0\}$.\\
	And we have (using the relation (\ref{m5})), 
	\begin{equation}
	\label{m12}
	\mathbf{J}_{\prod_{\substack{r=1\\r\neq i}}^ml_r}(e_i) = \mathcal{C}^i_{\prod_{\substack{r=1\\r\neq i}}^ml_r}(e_i) \equiv \mathcal{C}_{\prod_{\substack{r=1\\r \notin K}}^ml_r}(e_i),
	\end{equation}
	where, $\mathcal{C}_{\prod_{\substack{r=1\\r \notin K}}^ml_r}(e_i)$ is the coefficient of some monomial $\prod_{\substack{r = 1\\r \notin K}}^{m}{e_r}^{l_r}$ (from the congruence relation (\ref{m5})).
		
	Now, we involve in defining a subset of $\mathbb{Z}_p\setminus\{0, 1, 2, \ldots, \Delta+1, \alpha_{i-1}\}$ from which $\beta_i$ is chosen.\\
	
	Let ${\mathcal{B}} = \mathbb{Z}_p\setminus\{0, 1, 2, \ldots, \Delta+1, \alpha_{i-1}\}$. 
	
	Since $p\geq m^2(2\Delta+2)$ and $|{\mathcal{B}}|$ is greater than $2m(\Delta+1)$, there exists $\beta_i \in \mathcal{B}$ such that for each $e_j \in M_2$, $e_j - \beta_i$ divides $\mathbf{J}_{\prod_{\substack{r=1\\r\neq j}}^ml_r}(e_j)$ but $(e_j - \beta_i)^2$ does not divide $\mathbf{J}_{\prod_{\substack{r=1\\r\neq j}}^ml_r}(e_j)$. And how to choose such a $\beta_i$ is 
	explained below.\\
	
	For each $e_j \in M_2$,\\
	let $\mathcal{B}_j = \{\gamma \in \mathcal{B}: (e_j - \gamma)^2\ divides\ \mathbf{J}_{\prod_{\substack{r=1\\r\neq j}}^ml_r}(e_j) \}\subset \mathcal{B}$. As noted earlier, for each $e_j \in M_2 $, 
	exponent of $e_j$ in $\mathbf{J}_{\prod_{\substack{r=1\\r\neq j}}^ml_r}(e_j)$ \textnormal{is always} $\leq (2\Delta+p-(\Delta+2)) = p+\Delta-2$ and congruence relation (\ref{eq13}) implies number of repeated roots can be at most $\Delta-1$. Then cardinality of $\mathcal{B}_j$ is at most $\Delta-1$. 
	And let $\mathcal{B}_i = \{\gamma \in \mathcal{B}: e_i - \gamma\ divides\ \mathcal{C}_{\prod_{\substack{r=1\\r \notin K}}^ml_r}(e_i) \}$, then $|\mathcal{B}_i|\leq 2\Delta$. \\
	
	Now, we choose a $\beta_i$ from the set $\mathcal{B}\setminus \cup_{k=1}^i \mathcal{B}_j $, that is, $\beta_i \in \mathcal{B}\setminus \cup_{k=1}^i \mathcal{B}_j $. \\ 
	
	So, for each $e_j \in M_2 \cup \{e_i\}$, $(e_j-\beta_i)$ divides $\mathbf{J}_{\prod_{\substack{r=1\\r\neq j}}^ml_r}(e_j)$ but $(e_j-\beta_i)^2$ does not divide $\mathbf{J}_{\prod_{\substack{r=1\\r\neq j}}^ml_r}(e_j)$.\\
	
	From the congruence relations (\ref{m3}) and (\ref{eq12}), we can conclude that, for each $e_j \in M_2$, $(e_j-\beta_i)$ divides
	$\mathbf{G'}(\boldsymbol{e})$ 
	but $(e_j-\beta_i)^2$ does not divide $\mathbf{G'}(\boldsymbol{e})$. From the congruence relation (\ref{m12}) and the choice of $\beta_i$, we observe that $e_i - \beta_i$ does not divide the $\mathcal{C}_{\prod_{\substack{r=1\\r \notin K}}^ml_r}(e_i)$, so $e_i - \beta_i$ does not divide $\mathbf{G'}(\boldsymbol{e})$.
\end{proof} 

\begin{proof} [Proof of Lemma \ref{l3}]
	By the definition of  $\mathbf{G}(\boldsymbol{e})$, we have \\
	
	{ 
		\begin{multline*}
		\mathbf{G}(\boldsymbol{e}) = \mathcal{C}^{{\mathbf{P'}}}(\boldsymbol{e})\Bigg(\prod_{j=1}^{i-1}\Big(\prod_{\substack{\textnormal{if}\ N_j(e_j) \neq \emptyset \\ e_l \in N_j(e_j)}}({e_j} - e_l)\prod_{k \in \mathbb{Z}_p\setminus\{1, 2, \ldots, \Delta+1, \alpha = \alpha_{i-1}\}}({e_j} - k)\Big)\Bigg)\Bigg(\prod_{j=1}^{k}(e_{l_j}-\alpha_{i-1})\Bigg).
		\end{multline*}}
	
	After rearranging factors, $\mathbf{G}(\boldsymbol{e})$ can be rewritten as,
	
	\begin{multline}
	\label{m9}
	\mathbf{G}(\boldsymbol{e}) = \mathcal{C}^{\mathbf{P'}}(\boldsymbol{e})\Bigg( \prod_{e_j \in M_1} \Big(\prod_{\substack{\textnormal{if}\ N_j(e_j) \neq \emptyset \\ e_l \in N_j(e_j)}}({e_j} - e_l)\prod_{k \in \mathbb{Z}_p\setminus\{1, 2, \ldots, \Delta+1\}}({e_j} - k)\Big)\\\prod_{e_j \in M_2} \Big(\prod_{\substack{\textnormal{if}\ N_j(e_j) \neq \emptyset \\ e_l \in N_j(e_j)}}({e_j} - e_l)\prod_{k \in \mathbb{Z}_p\setminus\{1, 2, \ldots, \Delta+1, \alpha_{i-1}, \beta_i\}}({e_j} - k)\Big)\Bigg)\prod_{e_j \in M_2}\big(e_j-\beta_i\big).
	\end{multline}
	
	From the congruence relation (\ref{m5}), we have 
	
	\begin{equation*}
	\mathbf{G'}(\boldsymbol{e}) \equiv \prod_{e_j \in M_2}\Big(\prod_{l \in \mathbb{Z}_p\setminus\{\alpha_{i-1}\}}(e_j-l)\Big)\bigg(\sum_{\prod_{\substack{r=1\\r \notin K}}^ml_r}\mathcal{C}_{\prod_{\substack{r=1\\r \notin K}}^ml_r}(e_i)\prod_{\substack{r = 1\\r \notin K}}^{m}{e_r}^{l_r}\bigg),
	\end{equation*}
	where, $K = \{s: e_s \in M_2\} \cup \{i\}$ and $\mathcal{C}_{\prod_{\substack{r=1\\r \notin K}}^ml_r}(e_i)$ is a univariate polynomial in $e_i$.
	
	After rearranging factors, $\mathbf{G'}(\boldsymbol{e})$ can be rewritten as,
	
	\begin{multline}
	\label{m10}
	\mathbf{G'}(\boldsymbol{e}) \equiv \prod_{e_j \in M_2}\Big(\prod_{l \in \mathbb{Z}_p\setminus\{\alpha_{i-1}, \beta_i\}}(e_j-l)\Big)\Bigg(\sum_{\prod_{\substack{r=1\\r \notin K}}^ml_r}\mathcal{C}_{\prod_{\substack{r=1\\r \notin K}}^ml_r}(e_i)\prod_{\substack{r = 1\\r \notin K}}^{m}{e_r}^{l_r}\Bigg)\prod_{e_j \in M_2}\Big(e_j-\beta_i\Big),
	\end{multline}
	where, $K = \{s: e_s \in M_2\} \cup \{i\}$ and $\mathcal{C}_{\prod_{\substack{r=1\\r \notin K}}^ml_r}(e_i)$ is a univariate polynomial in $e_i$.
	
	Now, we consider $\frac{\mathbf{G}(\boldsymbol{e})}{\prod_{e_j \in M_2}\Big(e_j-\beta_i\Big)}$ and $\frac{\mathbf{G'}(\boldsymbol{e})}{\prod_{e_j \in M_2}\Big(e_j-\beta_i\Big)}$.\\ \\
	
	From the relation (\ref{m9}), we have,
	 
	\begin{multline*} 
	\frac{\mathbf{G}(\boldsymbol{e})}{\prod_{e_j \in M_2}\Big(e_j-\beta_i\Big)} = \mathcal{C}^{\mathbf{P'}}(\boldsymbol{e})\Bigg(\prod_{e_j \in M_1} \Big(\prod_{\substack{\textnormal{if}\ N_j(e_j) \neq \emptyset \\ e_l \in N_j(e_j)}}({e_j} - e_l)\prod_{k \in \mathbb{Z}_p\setminus\{1, 2, \ldots, \Delta+1\}}({e_j} - k)\Big)\\\prod_{e_j \in M_2} \Big(\prod_{\substack{\textnormal{if}\ N_j(e_j) \neq \emptyset \\ e_l \in N_j(e_j)}}({e_j} - e_l)\prod_{k \in \mathbb{Z}_p\setminus\{1, 2, \ldots, \Delta+1, \alpha_{i-1}, \beta_i\}}({e_j} - k)\Big)\Bigg).
	\end{multline*}
	
	And from the relation (\ref{m10}), we have, 
	\begin{multline}
	\label{m11}
	\frac{\mathbf{G'}(\boldsymbol{e})}{\prod_{e_j \in M_2}\Big(e_j-\beta_i\Big)} \equiv \prod_{e_j \in M_2}\Big(\prod_{l \in \mathbb{Z}_p\setminus\{\alpha_{i-1}, \beta_i\}}(e_j-l)\Big)\bigg(\sum_{\prod_{\substack{r=1\\r \notin K}}^ml_r}\mathcal{C}_{\prod_{\substack{r=1\\r \notin K}}^ml_r}(e_i)\prod_{\substack{r = 1\\r \notin K}}^{m}{e_r}^{l_r}\bigg),
	\end{multline}
	where, $K = \{s: e_s \in M_2\} \cup \{i\}$ and $\mathcal{C}_{\prod_{\substack{r=1\\r \notin K}}^ml_r}(e_i)$ is a univariate polynomial in $e_i$.\\
	
	From the Claim \ref{cl2}, for each $e_j \in M_2 $, $(e_j-\beta_i)$ divides $\mathbf{J}_{\prod_{\substack{r=1\\r\neq j}}^ml_r}(e_j)$ but $(e_j-\beta_i)^2$ does not divide $\mathbf{J}_{\prod_{\substack{r=1\\r\neq j}}^ml_r}(e_j)$. Moreover, for each $e_j \in M_2$, $(e_j-\beta_i)$ divides
	$\mathbf{G'}(\boldsymbol{e})$ 
	but $(e_j-\beta_i)^2$ does not divide $\mathbf{G'}(\boldsymbol{e})$. And $e_i-\beta_i$ does not divide $\mathbf{G'}(\boldsymbol{e})$ as well. Therefore, we can conclude that,
	 
	\begin{equation*}
	\frac{\mathbf{G}(\boldsymbol{e})}{\prod_{e_j \in M_2}\Big(e_j-\beta_i\Big)} \equiv \frac{\mathbf{G'}(\boldsymbol{e})}{\prod_{e_j \in M_2}\Big(e_j-\beta_i\Big)},
	\end{equation*}
	that is,
	\begin{multline*}
	\mathcal{C}^{\mathbf{P'}}(\boldsymbol{e})\Bigg(\prod_{e_j \in M_1} \Big(\prod_{\substack{\textnormal{if}\ N_j(e_j) \neq \emptyset \\ e_l \in N_j(e_j)}}({e_j} - e_l)\prod_{k \in \mathbb{Z}_p\setminus\{1, 2, \ldots, \Delta+1\}}({e_j} - k)\Big) \\ \prod_{e_j \in M_2} \Big(\prod_{\substack{\textnormal{if}\ N_j(e_j) \neq \emptyset \\ e_l \in N_j(e_j)}}({e_j} - e_l)\prod_{k \in \mathbb{Z}_p\setminus\{1, 2, \ldots, \Delta+1, \alpha_{i-1}, \beta_i\}}({e_j} - k)\Big)\Bigg)\\ \equiv \prod_{e_j \in M_2}\Big(\prod_{l \in \mathbb{Z}_p\setminus\{\alpha_{i-1}, \beta_i\}}(e_j-l)\Big)\bigg(\sum_{\prod_{\substack{r=1\\r \notin K}}^ml_r}\mathcal{C}_{\prod_{\substack{r=1\\r \notin K}}^ml_r}(e_i)\prod_{\substack{r = 1\\r \notin K}}^{m}{e_r}^{l_r}\bigg),
	\end{multline*}
	where, $K = \{s: e_s \in M_2\} \cup \{i\}$ and $\mathcal{C}_{\prod_{\substack{r=1\\r \notin K}}^ml_r}(e_i)$ is a univariate polynomial in $e_i$.
\end{proof}

\begin{proof} [Proof of Claim \ref{cl3}] 
	Let us consider a polynomial $\mathbf{K}(\boldsymbol{e})$ as follows,
	
	\begin{equation*}
	\mathbf{K}(\boldsymbol{e}) =\Bigg(\frac{\mathbf{G'}(\boldsymbol{e})}{\prod_{e_j \in M_2}(e_j-\beta_i)}\Bigg)\prod_{e_j \in M_2}\Big(e_j-\alpha_{i-1}\Big)\prod_{l\in \mathbb{Z}_p\setminus\{ 1, 2, \ldots, \Delta+1, \alpha=\beta_i\}}(e_i-l),
	\end{equation*}
	
	using congruence relation (\ref{m11}), the polynomial $\mathbf{K}(\boldsymbol{e})$ can be rewritten as, \\
	
	\begin{multline*}
	\mathbf{K}(\boldsymbol{e}) \equiv  
	\prod_{e_j \in M_2}\Bigg(\Big(\prod_{l \in \mathbb{Z}_p\setminus\{\alpha_{i-1}, \beta_i\}}(e_j-l)\Big)\Big(e_j-\alpha_{i-1}\Big)\Bigg)\bigg(\sum_{\prod_{\substack{r=1\\r \notin K}}^ml_r}\Big(\mathcal{C}_{\prod_{\substack{r=1\\r \notin K}}^ml_r}(e_i)\prod_{l\in \mathbb{Z}_p\setminus\{ 1, 2, \ldots, \Delta+1, \alpha=\beta_i\}}(e_i-l)\Big)\prod_{\substack{r = 1\\r \notin K}}^{m}{e_r}^{l_r}\bigg).
	\end{multline*}\\

	Now, we can conclude that $\mathbf{K'}(\boldsymbol{e}) \not\equiv 0$ mod $p$ as explained below. The exponent of each $e_j$  in the following product,\\
	\begin{multline}
	\label{e3}
	\prod_{e_j \in M_2}\Bigg(\prod_{l \in \mathbb{Z}_p\setminus\{\alpha_{i-1}, \beta_i\}}\big(e_j-l\big)\big(e_j-\alpha_{i-1}\big)\Bigg) = \prod_{e_j \in M_2}\Bigg(\prod_{l \in \mathbb{Z}_p\setminus\{\beta_i\}}(e_j-l)\Bigg),
	\end{multline}
	is $\leq p-1$.\\
	
	From Claim \ref{cl2}, $e_i - \beta_i$ does not divide $\mathbf{G'}(\boldsymbol{e})$ and congruence relation (\ref{m12}) guarantee that existence of a $\mathcal{C}_{\prod_{\substack{r=1\\r \notin K}}^ml_r}(e_i)$ in $\mathbf{G'}(\boldsymbol{e})$ such that $e_i - \beta_i$ does not divide $\mathcal{C}_{\prod_{\substack{r=1\\r \notin K}}^ml_r}(e_i)$. Therefore, the following polynomial is not a zero polynomial, after applying Fermat's theorem to $e_i$, that is,\\
	 
	\begin{equation}
	\label{e4}
	\sum_{\prod_{\substack{r=1\\r \notin K}}^ml_r}\Big(\mathcal{C}_{\prod_{\substack{r=1\\r \notin K}}^ml_r}(e_i)\prod_{l\in \mathbb{Z}_p\setminus\{ 1, 2, \ldots, \Delta+1, \alpha=\beta_i\}}(e_i-l)\Big)\prod_{\substack{r = 1\\r \notin K}}^{m}{e_r}^{l_r} \not\equiv 0\ \textnormal{mod}\ p,
	\end{equation}
	and the  exponent of each $e_r$ ($r\neq i$) and exponent of $e_i$ (after applying Fermat's theorem)
	is $\leq p-1$.\\
	
	So, $\mathbf{K'}(\boldsymbol{e}) \not\equiv 0$ mod $p$, as $\{e_r:r \notin K, 1 \leq r \leq m\} \cap \{e_r:r \in K\} = \emptyset$ and $\mathbf{K'}(\boldsymbol{e})$ is the product of above polynomials (\ref{e3}) and (\ref{e4}).\\
	
	Since $\mathbf{K'}(\boldsymbol{e}) \not\equiv 0$ mod $p$, to each of the variable $e_1, e_{2}, \ldots, e_{m}$, we associate the sets $\mathcal{A}_1 = \mathbb{Z}_p, \mathcal{A}_{2} = \mathbb{Z}_p, \ldots,$ $\mathcal{A}_{m} = \mathbb{Z}_p$ respectively. Then there exists $\beta'_{e_1} \in \mathcal{A}_1, \beta'_{e_2} \in \mathcal{A}_{2}, \ldots,$ $\beta'_{e_m} \in \mathcal{A}_{m}$ such that
	
	\begin{equation}
	\label{eq4}
	\mathbf{K'}(\beta'_{e_1}, \beta'_{e_2}, \ldots, \beta'_{e_i}, \ldots, \beta'_{e_m}) \not\equiv 0\ \textnormal{mod}\ p.
	\end{equation}\\
	
	From the above congruence relation (\ref{eq4}), we can also conclude that $\mathbf{K'}(\boldsymbol{e}) \not\equiv 0$ mod $p$, and $\mathbf{K'}(\boldsymbol{e})$ can be rewritten as,\\
	
	\begin{multline*}
	\mathbf{K'}(\boldsymbol{e}) \equiv \prod_{e_j \in M_2}\Big(e_j-\alpha_{i-1}\Big)\Bigg(\prod_{e_j \in M_2}\bigg(\prod_{l \in \mathbb{Z}_p\setminus\{\alpha_{i-1}, \beta_i\}}(e_j-l)\bigg)\bigg(\sum_{\prod_{\substack{r=1\\r \notin K}}^ml_r}\Big(\mathcal{C}_{\prod_{\substack{r=1\\r \notin K}}^ml_r}(e_i)\Big)\prod_{\substack{r = 1\\r \notin K}}^{m}{e_r}^{l_r}\bigg)\Bigg) \\ \prod_{l\in \mathbb{Z}_p\setminus\{ 1, 2, \ldots, \Delta+1, \alpha=\beta_i\}}(e_i-l).
	\end{multline*}\\
	
	And from the Lemma \ref{l3} $\mathbf{K'}(\boldsymbol{e})$ can be rewritten as,
	\begin{multline*}
	\mathbf{K}(\boldsymbol{e}) = \prod_{e_j \in M_2}\Big(e_j-\alpha_{i-1}\Big)\Bigg(\mathcal{C}^{\mathbf{P'}}(\boldsymbol{e})\prod_{e_j \in M_1} \Big(\prod_{\substack{\textnormal{if}\ N_j(e_j) \neq \emptyset \\ e_l \in N_j(e_j)}}({e_j} - e_l)\prod_{k \in \mathbb{Z}_p\setminus\{1, 2, \ldots, \Delta+1\}}({e_j} - k)\Big)\\\prod_{e_j \in M_2} \Big(\prod_{\substack{\textnormal{if}\ N_j(e_j) \neq \emptyset \\ e_l \in N_j(e_j)}}({e_j} - e_l)\prod_{k \in \mathbb{Z}_p\setminus\{1, 2, \ldots, \Delta+1, \alpha_{i-1}, \beta_i\}}({e_j} - k)\Big)\Bigg) \prod_{k\in \mathbb{Z}_p\setminus\{ 1, 2, \ldots, \Delta+1, \alpha=\beta_i\}}(e_i-k).
	\end{multline*}\\
	So, 
	\begin{multline}
	\label{e5}
	\mathbf{K}(\boldsymbol{e}) \equiv \Bigg(\mathcal{C}^{\mathbf{P'}}(\boldsymbol{e})\prod_{j=1}^{i-1}\Big(\prod_{\substack{\textnormal{if}\ N_j(e_j) \neq \emptyset \\ e_l \in N_j(e_j)}}({e_j} - e_l)\prod_{k \in \mathbb{Z}_p\setminus\{1, 2, \ldots, \Delta+1, \beta_i\}}({e_j} - k)\Big) \Bigg) \prod_{k\in \mathbb{Z}_p\setminus\{ 1, 2, \ldots, \Delta+1, \alpha=\beta_i\}}(e_i-k).
	\end{multline}
	In other words, $$\mathbf{K}(\boldsymbol{e}) \equiv \mathbf{Q}_{i-1}(\boldsymbol{e})\prod_{k\in \mathbb{Z}_p\setminus\{ 1, 2, \ldots, \Delta+1, \alpha=\beta_i\}}(e_i-k) \not\equiv 0\ \textnormal{mod}\ p.$$ \\
	
	From the congruence relation (\ref{eq4}), we have $\mathbf{K}(\beta'_{e_1}, \beta'_{e_2}, \ldots, \beta'_{e_{i-1}}, \beta'_{e_i}, e_{i+1}, \ldots, e_m) \not\equiv 0$ \textnormal{mod} $p$. The following 
	products in $\mathbf{K}(\boldsymbol{e})$ make sure that $\beta_{e_j}\in \mathcal{K} = \{1, 2, \dots, \Delta+1, \alpha = \beta_i\}$
	$$\prod_{j=1}^i\Big(\prod_{k \in \mathbb{Z}_p\setminus\{1, 2, \ldots, \Delta+1, \beta_i\}}({e_j} - k)\Big).$$ \\ 
	
	We can see that the exponent of each variable $e_j$ ($j>i$) in the polynomial $\mathbf{K}(\beta'_{e_1}, \ldots, \beta'_{e_{i-1}}$, $\beta'_{e_i}, e_{i+1}, \ldots, e_m)$ is always less than or equal to $2\Delta$. Let $\prod_{j = i+1}^ me_j^{l_j}$ ($0 \leq l_{j} \leq 2\Delta$) be a monomial of maximum degree in $\mathbf{K}(\beta'_{e_1}, \ldots, \beta'_{e_{i-1}}, \beta'_{e_i}, e_{i+1}, \ldots, e_m)$, and we have that $\prod_{e_l \in N_i(e_i)}e_l$  is the only monomial (unique) of maximum degree in $$\prod_{\substack{\textnormal{if}\ N_i(e_i) \neq \emptyset \\ e_l \in N_i(e_i)}}(\beta'_{e_i} - e_l).$$\\
	 
	The product of the above two monomials give a unique monomial of maximum degree whose coefficient is non-zero ($\not\equiv 0$ mod $p$) in the following product of polynomials (\ref{e6}). Therefore, the following product of polynomials is not a zero polynomial,
	\begin{equation}
	\label{e6}
	\mathbf{K}(\beta'_{e_1}, \beta'_{e_2}, \ldots, \beta'_{e_{i-1}}, \beta'_{e_i}, e_{i+1}, \ldots, e_m)\prod_{\substack{\textnormal{if}\ N_i(e_i) \neq \emptyset \\ e_l \in N_i(e_i)}}(\beta'_{e_i} - e_l) \not\equiv 0\ \textnormal{mod}\ p.
	\end{equation}
	\textnormal{So, we can also conclude that,}\
	\begin{equation*}
	\mathbf{K}(\boldsymbol{e})\prod_{\substack{\textnormal{if}\ N_i(e_i) \neq \emptyset \\ e_l \in N_i(e_i)}}({e_i} - e_l) \not\equiv 0\ \textnormal{mod}\ p.
	\end{equation*}  
	Replacing $\mathbf{K}(\boldsymbol{e})$ by the relation (\ref{e5}), we get
	\begin{multline*}
	\Bigg(\mathcal{C}^{\mathbf{P'}}(\boldsymbol{e})\prod_{j=1}^{i-1}\Big(\prod_{\substack{\textnormal{if}\ N_j(e_j) \neq \emptyset \\ e_l \in N_j(e_j)}}({e_j} - e_l)\prod_{k \in \mathbb{Z}_p\setminus\{1, 2, \ldots, \Delta+1, \beta_i\}}({e_j} - k)\Big) \Bigg)\prod_{\substack{\textnormal{if}\ N_i(e_i) \neq \emptyset \\ e_l \in N_i(e_i)}}({e_i} - e_l) \prod_{k\in \mathbb{Z}_p\setminus\{ 1, 2, \ldots, \Delta+1, \alpha=\beta_i\}}(e_i-k),
	\end{multline*}
	that is, $\mathbf{Q}_{i-1}(\boldsymbol{e})\mathbf{E}^i(e_i, e_{i+1}, \ldots, e_m)$,
	this is nothing but, $\mathbf{Q}_i(\boldsymbol{e})$.\\
	
	From the congruence relation (\ref{e6}), we can conclusively say that, 
	there exist $\beta'_{e_j} \in \mathcal{K} = \{1, 2, \dots, \Delta+1, \alpha = \beta_i\}$ such that $$\mathbf{Q'}_i(\beta'_{e_1}, \beta'_{e_2}, \ldots, \beta'_{e_{i-1}}, \beta'_{e_i}, e_{i+1}, \ldots, e_m) \not\equiv 0\ \textnormal{mod}\ p.$$ 
\end{proof}

\begin{proof} [Proof of Theorem \ref{pt4}]
	By the Result \ref{cl4}, we have $\mathbf{Q}_m(\beta_{e_1}, \beta_{e_2}, \ldots, \beta_{e_m}) \not\equiv 0$ mod $p$. And 
	from the earlier stated definition, we have 
	\begin{equation*}
	\mathbf{Q}_m(\beta_{e_1}, \beta_{e_2}, \ldots, \beta_{e_m}) = \mathcal{C}^{\mathbf{P'}}(\beta_{e_1}, \beta_{e_2}, \ldots, \beta_{e_m})\prod_{j=1}^{m}(\mathbf{E}^j(\beta_{e_j}, \beta_{e_{j+1}}, \ldots, \beta_{e_m})) \not\equiv 0\ \textnormal{mod}\ p.
	\end{equation*}
	This can also be written as
	\begin{equation*}
	\mathbf{Q}_m(\beta_{e_1}, \beta_{e_2}, \ldots, \beta_{e_m}) = \mathcal{C}^{\mathbf{P'}}(\beta_{e_1}, \beta_{e_2}, \ldots, \beta_{e_m})\mathbf{E}_m(\beta_{e_1}, \beta_{e_2}, \ldots, \beta_{e_m}) \not\equiv 0\ \textnormal{mod}\ p.
	\end{equation*}
	
	If $\mathbf{Q}_m(\beta_{e_1}, \beta_{e_2}, \ldots, \beta_{e_m}) \not\equiv 0\ \textnormal{mod}\ p$, the mapping $f(e_i) = \beta_{e_i}$($1 \leq i \leq m$), as explained earlier, defines the \textit{edge coloring} of the given graph by using $\Delta + 2$ colors. And we have that $\mathcal{C}^{\mathbf{P'}}(\beta_{e_1}, \beta_{e_2}, \ldots, \beta_{e_m}) \not\equiv 0$ mod $p$ which is the coefficient of $\prod_{i=1}^nv_i^{l_i}$ (for some $l_i \geq 0$) in $\mathbf{P'}(v_1, v_2, \ldots, v_n,$ $e_1, e_2, \ldots, e_m)$. Therefore, the polynomial  $\mathbf{P'}(v_1, v_2, \ldots, v_n, \beta_{e_1}, \beta_{e_2}$, $\ldots, \beta_{e_m}) \not\equiv 0$ mod $p$.\\
	
	Since $\mathbf{P'}(v_1, v_2, \ldots, v_n, \beta_{e_1}, \beta_{e_2}$, $\ldots, \beta_{e_m}) \not\equiv 0$ mod $p$, to each variable $v_1$, $v_2$, \ldots, $v_n$ we associate the set $\mathcal{A}_1 = \mathbb{Z}_p$, $\mathcal{A}_2 = \mathbb{Z}_p$, \ldots, $\mathcal{A}_n = \mathbb{Z}_p$ respectively. Then there exists $\beta_{v_1} \in \mathcal{A}_1$, $\beta_{v_2} \in \mathcal{A}_2$ \ldots, $\beta_{v_n} \in \mathcal{A}_n$ such that $\mathbf{P'}(\beta_{v_1}, \beta_{v_2}, \ldots, \beta_{v_n}, \beta_{e_1}, \beta_{e_2}, \ldots, \beta_{e_m}) \not\equiv 0$ mod $p$.\\
	
	We get $\mathbf{P}(\beta_{v_1}, \beta_{v_2}, \ldots, \beta_{v_n}, \beta_{e_1}, \beta_{e_2}, \ldots, \beta_{e_m}) \equiv \mathbf{P'}(\beta_{v_1}, \beta_{v_2}, \ldots, \beta_{v_n}, \beta_{e_1}, \beta_{e_2}, \ldots, \beta_{e_m})\not\equiv 0$ mod $p$. The following products in $\mathbf{P}(v_1, v_2, \ldots, v_n, e_1, e_2, \ldots, e_m)$ make sure that $\beta_{v_i} \in \{1, 2, \ldots, \Delta +1\}$
	\begin{equation*}
	\prod_{i=1}^{n}\Bigg( \prod_{l = \Delta + 2}^{p}(\beta_{v_i}-l)\Bigg).
	\end{equation*}
	
\end{proof}

\begin{proof} [Proof of  Corollary \ref{pc1}]
	By the Result \ref{cl4} the mapping $f(e_i) = \beta_{e_i}$($1 \leq i \leq m$) defines the \textit{edge coloring} of the given graph by using $\Delta + 2$ colors.  Further the previous Theorem \ref{pt4} make sure that 
	no two adjacent vertices have the same color and no  edge  has  the  same  color  as  one  of  its end vertices.\\
	
	Therefore, the mapping $f(v_i) = \beta_{v_i}$($1 \leq i \leq n$) and $f(e_i) = \beta_{e_i}$($1 \leq i \leq m$), will conclusively define that the \textit{total coloring} of the given graph can be achieved by using $\Delta + 2$ colors.
\end{proof}
\clearpage
\section{Schematic representation of presentation of the paper}

	\tikzstyle{decision} = [diamond, draw,  text width=5.0em, text centered, node distance=4cm,]   
	\tikzstyle{document} = [rectangle, draw,   
	text width=4.7em, text centered, node distance=2.5cm,]   
	\tikzstyle{block1} = [rectangle, draw,   %here, we have chosen another block for the different color  
	text width=19.5em, text centered, rounded corners, node distance= 4cm, minimum height=3em] % you can create as many different shapes to make your diagram more creative and attractive, depending on the requirements  
	\tikzstyle{block} = [rectangle, draw,    
	text width=13.5em, text centered, rounded corners, node distance=7cm, minimum height=4.5em]  
	\tikzstyle{block2} = [rectangle, draw,    
	text width=5em, text centered, rounded corners, node distance=3cm, minimum height=4em] 
	\tikzstyle{line} = [draw, -latex']  
	\tikzstyle{cloud} = [draw, ellipse,text width= 2.9em, node distance=2.3cm, minimum height=3em]  
	
	\tikzstyle{ioi} = [trapezium, draw, trapezium right angle=120,rounded corners,  node distance=7cm, minimum height=4.5em]  
	\tikzstyle{io} = [trapezium, draw, trapezium right angle=110,rounded corners,  node distance=4cm, minimum height=.6em]   % the draw command here is used to draw the boundary of mentioned shape.  

		\center{\textbf{The following flowchart gives us the outer view of the flow in the \lq Algorithm 1 \rq\ in obtaining Result \ref{cl4}}\\
		\vspace{1cm}
		\begin{tikzpicture}[node distance = 1.8cm, auto] % the command node distance is important as it determines the space or the length of the arrow between different blocks.  
		% the command given below are the place of nodes
		
		\node [cloud] (init) {Start};  
		\node [io, below of=init](B){$i=1$;\ $\alpha_1 = \Delta+2$} ;
		%	\node [block2, below of=A](B){Claim \ref{cl1} is true;\\ $i=2$;\\ $\alpha_2 = \Delta+2$};  
		\node [decision,below of=B](C){While $i \leq m$};
		\node [decision,below of=C](D){If \\Hyphothesis \ref{h1}};%If \\Hyphothesis \ref{h1}};
		\node [block, left of = D](E){$i = i+1$;\\ $\alpha_i = \alpha_{i-1}$;\\$\mathcal{K} = \{1, 2, \dots, \Delta+1, \alpha = \alpha_i\}$};
		\node [block1,below of = D](F){Using Lemma \ref{l2}, prove the Claim \ref{cl2};\\ Using Lemma \ref{l3}, prove the the Claim \ref{cl3};\\$\alpha_i=\beta_i$};
		\node [io, right of=C] (J) {Result \ref{cl4}};	
		\node [cloud,  right of=J] (H) {Stop}; 
		
		\path [line] (init) -- (B);   
		%	\path [line] (A) -- (B); 
		\path [line] (B) -- (C); 
		\path [line] (C) -- node[right]{True}(D);
		\path [line] (D) -- node[below]{False}(E);  
		\path [line] (E) |- (C.west);
		\path [line] (D) -- node[right]{True}(F);
		\path [line] (F) -| (E.south);
		\path [line] (C) -- node[below]{False}(J);
		\path [line] (J) -- (H);

		\end{tikzpicture}  
\clearpage
\center{\textbf{The following schematic diagram gives us the overall outer view of presentation of the paper}}\\

\begin{tikzpicture}[node distance = 0.8cm, auto] % the command node distance is important as it determines the space or the length of the arrow between different blocks.  
% the command given below are the place of nodes 

\node [block2](B1){Theorem \ref{t1}};
\node [block2, right of=B1](C1){Lemma \ref{l1}};
\node [block2, right of=C1](D1){Result \ref{cl4}\\
	(By proving the Claim \ref{cl1}, Claim \ref{cl2} and Claim \ref{cl3})};
\node [block2, right of=D1](E1){Theorem \ref{pt4}};
\node [block2, right of=E1](F1){Corollary \ref{pc1}};

\path [line] (B1) -- (C1); 
\path [line] (C1) -- (D1);
\path [line] (D1) -- (E1);  
\path [line] (E1) -- (F1);

\end{tikzpicture}

\vspace{1cm}
\center{\textbf{Acknowledgments:}}\\

\ \ The author would like to thank Raghu Menon for enthusiastically copy-editing initial drafts and Ryan Alweiss for his inputs that resulted in major changes in the paper. \\
%\vspace{1cm}
%The author would like to thank 
%Sunil Chandran L for reading the initial manuscripts, his insigthful suggestions %improved presentation of the paper and for supporting the fellowship, and 
%Raghu Menon for enthusiastically copy-editing initial drafts.	 

\bibliographystyle{amsplain}

\end{document}